\documentclass[12pt]{article}
\usepackage{amsmath, xcolor}
\usepackage{amsmath, amsthm, amscd, amsfonts, amssymb, graphicx}

\setbox0=\hbox{$+$}
\newdimen\plusheight
\plusheight=\ht0
\def\+{\;\lower\plusheight\hbox{$+$}\;}

\setbox0=\hbox{$-$}
\newdimen\minusheight
\minusheight=\ht0
\def\-{\;\lower\minusheight\hbox{$-$}\;}

\setbox0=\hbox{$\cdots$}
\newdimen\cdotsheight
\cdotsheight=\plusheight
\def\cds{\lower\cdotsheight\hbox{$\cdots$}}

\renewcommand{\(}{\left\(}
\renewcommand{\)}{\right\)}

\renewcommand{\pmod}[1]{\,(\textup{mod}\,#1)}
\numberwithin{equation}{section}
\theoremstyle{plain}
\newtheorem{theorem}{Theorem}[section]
\newtheorem{lemma}[theorem]{Lemma}

\newtheorem{corollary}[theorem]{Corollary}

\begin{document}

\vspace{-2cm}

\begin{center}
	
	\begin{center}{\Large\textbf{Some new results for Andrews’ Kimberling partitions}}\end{center}\vskip5mm
	{\bf Gaurab Bardhan$^1$ and Nipen Saikia$^{2, \ast}$}\\
	$^1$Department of Mathematics, Tyagbir Hem Baruah College,\\ Jamugurihat, Sonitpur, Assam, India.\\
	E. Mail: gaurabbardhan561@gmail.com
	\vskip2mm
	$^2$Department of Mathematics, Rajiv Gandhi
	University,\\ Rono Hills, Doimukh, Arunachal Pradesh, India.\\
	E. Mail(s): nipennak@yahoo.com\\
	$^\ast$\textit{Corresponding author}.\end{center}
\noindent{\bf Abstract.}
  George E. Andrews (2016) introduced the Kimberling index, $K(\pi)$, of a partition $\pi$ of a positive integer $n$, which is defined  as
  $$ K(\pi) = (\text{largest part of } \pi) - (\text{least part of } \pi) - (\text{number of parts of } \pi). $$  
   Based on Kimberling index, Andrews defined five partition functions, $K_>(n),$ $K_<(n),$ $K_\leq(n),$ $K_=(n),$ and $K_\geq(n)$, called Kimberling partition functions, which count the numbers of partitions of a positive integer $n$ for which the Kimberling index $K(\pi) $ is $>0$, $<0$, $\leq0$, $=0$ and $\geq 0$, respectively. He also gave the generating functions  for $K_\le(n),$ $K_<(n),$ and $K_>(n)$ and established some relations connecting Kimberling partitions and other partition functions. Since then, the Kimberling partition functions  and their generating functions remained unexplored. In this paper, we derive  generating functions for $K_=(n),$ and $K_\geq(n)$,  and  establish some congruence relations of the five Kimberling partition functions by using the method of $q$-series identities. \\

 \noindent{\bf Keywords and Phrases:} Kimberling index; Kimberling partition; congruences; $q$-series identities.\vskip2mm

\noindent{\bf Mathematics Subject Classifications: }11P82; 11P83. 

\section{Introduction}
For any positive integer $n$ and any complex numbers $\alpha$ and $q$ with $|q|<1$, define  $q$-series in standard notations as
\begin{equation}\label{ie1}
   (\alpha;q)_{0}=1,\quad  (\alpha;q)_{n}=\prod_{j=0}^{n-1}(1-\alpha q^{j}),\quad(\alpha;q)_{\infty}=\prod_{j=0}^{\infty}(1-\alpha q^{j}). 
\end{equation}Throughout this paper, we will use the notation $g_j:=(q^j;q^j)_\infty$ for any positive integer $j$.

The non-increasing finite sequence of positive integers $\alpha_1\geq \alpha_2 \geq \alpha_3 \geq \cdots\geq \alpha_k>0$ is said to be a partition of a positive integer $n$ if $n=\sum_{i=1}^{k}\alpha_i$. The integers $\alpha_i$ are called parts or summands of the partition. For example, the partitions of $n=4$ are given by $4,\quad 3+1, \quad 2+2,\quad 2+1+1,\quad 1+1+1+1.$ If $p(n)$ denotes the number of partitions of a positive integer $n$ then $p(4)=5$.
Euler \cite{euler1748introductio} gave the generating function of $p(n)$ as
\begin{equation}\label{e1}
  \sum_{n=0}^{\infty} p(n) q^n = \dfrac{1}{(q; q)_\infty}; \qquad p(0)=1.  
\end{equation} 
Using the $q$-binomial theorem \cite{1}
\begin{equation}\label{ie3}
  \sum_{n=0}^{\infty}\dfrac{(\alpha;q)_{n}}{(q;q)_{n}}z^{n}=\dfrac{(\alpha;q)_{\infty}}{(q;q)_{\infty}},
\end{equation}
\eqref{e1} can be written as 
\begin{equation}\label{ier}
 \sum_{n=0}^{\infty} p(n) q^n= \sum_{n=0}^{\infty}\dfrac{q^n}{(q;q)_n}.
\end{equation}

Andrews \cite{kimberling} introduced the Kimberling index, $K(\pi),$ of a partition $\pi$ of a positive integer $n$ as
\begin{equation}
 K(\pi) = (\text{largest part of } \pi) - (\text{least part of } \pi) - (\text{number of parts of } \pi).   
\end{equation}
For example, the Kimberling index of the partition $\pi: 11 + 7 + 4 + 3$ of $n=25$  is  $11 - 3 - 5 = 3$. Andrews \cite{kimberling}  further defined five partition functions $K_>(n),$ $K_<(n),$ $K_\leq(n),$ $K_=(n)$ and $K_\geq(n)$, called Kimberling partition functions, which counts the numbers of partitions of a positive integer $n$ for which the Kimberling index $K(\pi)$ is $>0$, $<0$, $\leq0$, $=0$ and $\geq 0$, respectively. The five Kimberling partition functions corresponds to  the sequences A237803-A237834 in the OEIS \cite{ck}.

Andrews  \cite{kimberling} gave following generating functions for the  Kmberling partition functions $K_{\leq}(n), K_<(n)$ and $K_{>}(n)$, respectively: 
\begin{align}
	\sum_{n= 1}^{\infty} K_{\leq}(n) q^n &=\label{k<=1}\sum_{n= 1}^{\infty} \frac{q^n (q^{n+1}; q)_{n-1}}{(q; q)_n}\\
	&= \label{k<=2}\frac{1}{(q; q)_\infty} \sum_{n=1}^\infty (-1)^{n-1} n q^{n(3n-1)/2} (1 + q^n),\end{align} \begin{align} 
	\sum_{n= 1}^{\infty}K_<(n)q^n &=\label{k<1} \sum_{n= 1}^{\infty} \frac{q^n (q^n; q)_{n-1}}{(q; q)_n}\\
	&=\label{k<2} \frac{1}{(q; q)_\infty (1 - q)} \sum_{n=0}^\infty (-1)^{n-1} q^{3n(n-1)/2+1} (1 - q^{2n}),\\
	\sum_{n= 1}^{\infty} K_{>}(n) q^n &=\label{k>1} \sum_{n= 1}^{\infty}\frac{q^n (1 - (q^{n+1}; q)_{n-1})}{(q; q)_n}\\
	&\label{k>2} =\frac{1}{(q; q)_\infty} \sum_{n=1}^\infty (-1)^n (n-1) q^{n(3n-1)/2} (1 + q^n),
\end{align} 
and  proved following relatons connecting the Kimberling partitions,
\begin{equation}\label{k=}
    K_<(n) + K_=(n) + K_>(n) = p(n),
\end{equation}
\begin{equation}\label{k<}
    K_<(n) + K_=(n) = K_\leq(n),
\end{equation}
\begin{equation}\label{k}
    K_>(n) + K_=(n) = K_\geq(n).
\end{equation} He also established some relations connecting Kimberling partitions with some  other partition functions.
From \eqref{k=}-\eqref{k}, it is clear that, any two of the five Kimberling partition functions together with the partition functions $p(n)$ is sufficient to determine the other three Kimberling partition functions. Since then, the Kimberling partition functions  and their generating functions remained unexplored. In this paper, we derive  generating functions for $K_=(n),$ and $K_\geq(n)$,  and  establish some congruence relations of the five Kimberling partition functions by connecting them with some other arithmetic functions  by using the method of $q$-series identities.

Following Andrews, to  derive generating  functions for $K_=(n)$ and $K_\geq(n)$ we use the  Gaussian polynomials of $q$-binomial coefficients and the Fine's notation. The Gaussian polynomials of $q$-binomial coefficients is defined as follows:
\begin{equation}
    \begin{bmatrix} N \\ M \end{bmatrix} = \begin{cases}
 \dfrac{(q;q)_N }{(q;q)_M(q;q)_{N-M}},\quad  0 \leq M \leq N \\
 0,  \quad  \quad  \quad  \quad  \quad  \quad  \quad  \quad  \text{otherwise}.
\end{cases}   
\end{equation}
From \cite [p.33, Theorem 3.1]{1}, we note that the polynomial $\begin{bmatrix} N+M \\ M \end{bmatrix}$
is the generating function for partitions in which each part is $\leq N$ and the number of parts is $\leq M$.\\
The Fine's notation \cite[p.53, (25.94)]{fine} is defined as,
\begin{equation}\label{Q1}
    Q(a;q) := \sum_{m= 0}^{\infty} \frac{(aq^{m+1};q)_m q^m}{(q;q)_m}=\frac{1}{(q;q)_\infty} \sum_{m= 0}^{\infty} (-a)^m q^{3m(m+1)/2}.
\end{equation}
 From \eqref{k=},  we note that
    \begin{equation}\label{T2e1}
     \sum_{n=1}^{\infty}K_{=}(n)q^n=\sum_{n=1}^{\infty}p(n)q^n-\sum_{n=1}^{\infty}K_{<}(n)q^n-\sum_{n=1}^{\infty}K_{>}(n)q^n.
\end{equation}
Employing \eqref{ier}, \eqref{k<1} and \eqref{k>1} in \eqref{T2e1}, we obtain 
\begin{equation}\label{T2e2}
     \sum_{n=1}^{\infty}K_{=}(n)q^n=\sum_{n=1}^{\infty}\dfrac{q^n}{(q;q)_n}-\sum_{n=1}^{\infty}\dfrac{q^n(q^{n};q)_{n-1}}{(q;q)_n}-\sum_{n=1}^{\infty}\dfrac{q^n\left(1-(q^{n+1};q)_{n-1}\right)}{(q;q)_n}
\end{equation}
which on simplification gives
\begin{equation}\label{T2e3}
     \sum_{n=1}^{\infty}K_{=}(n)q^n=\sum_{n=1}^{\infty}\dfrac{q^{2n-1}(q^{n+1};q)_{n-2}\left(q-q^{n}\right)}{(q;q)_n}.
\end{equation}
Similarly, from \eqref{T2e3}, we obtain
 \begin{equation}\label{T2E1}
     \sum_{n=1}^{\infty}K_{=}(n)q^n=\sum_{n=1}^{\infty}\dfrac{q^{2n}(q^{n+1};q)_{n-2}\left(1-q^{n-1}\right)}{(q;q)_n}.
\end{equation}
Again, employing \eqref{ier} and \eqref{k<1} in \eqref{k=}, we obtain
\begin{equation}\label{k>=1}
\sum_{n=1}^{\infty}K_{\geq}(n)q^n= \sum_{n=1}^{\infty}\frac{q^n (1 - (q^{n}; q)_{n-1})}{(q; q)_n}
\end{equation}

In \cite{hop}, Hopkins and Sellers defined Garden of Eden partitions with those partitions having rank less than $-1$, where rank of a partition is the defined as the largest part  of partition minus number of parts.  If $ge(n)$ denotes the number of Garden of Eden partitions of $n$ then its generating function \cite{hop} is given by
\begin{align}
     \sum_{n=0}^\infty ge(n)q^n &=\label{gen} \frac{1}{(q; q)_\infty} \sum_{n=1}^\infty (-1)^{n-1} q^{3n(n+1)/2}\\
     &=\frac{1}{(q; q)_\infty} \left( q^3 - q^9 + q^{18} - q^{30} + \cdots \right).
\end{align}

An overpartition $\pi$ of a positive integer $n$ can be defined as a partition of $n$, where the first occurrence of each part can be overlined. Let $s(\pi)$ denote the smallest part of the partition $\pi$ of a positive integer $n$ and  $F(n)$ denote the number of overpartitions $\pi$ of $n$ into distinct parts where $s(\pi)$ occurs overlined and the non-overlined parts are in the half-open interval $[s(\pi), 2s(\pi))$. If $F_0(n)$ (resp. $F_1(n)$) denote the number of overpartition counted by $F(n)$ in which the number of parts is even (resp. odd), then  Andrews and Bachraoui \cite{F} established that
\begin{align}
   \sum_{n=1}^\infty F'(n)q^n=& \sum_{n=1}^\infty (F_1(n) - F_0(n))q^n\\
   =&\label{F}\sum_{n=1}^\infty q^n(q^{n+1}; q)_\infty(q^n; q)_n.
\end{align}

\section{Preliminaries}
Ramanujan's general theta function $f(\alpha, \beta)$ \cite[p. 34, (18.1)]{BBC} is defined by
\begin{equation}\label{gtheta}
   f(\alpha, \beta) = \sum_{m=-\infty}^{\infty} \alpha^{m(m+1)/2} \beta^{m(m-1)/2}, \qquad |\alpha\beta| < 1.
\end{equation}
Three important cases for $f(\alpha, \beta)$ are the functions $\phi(q), \psi(q)$, and $f(-q)$ \cite[p. 35, Entry 18]{BBC} which are defined as
\begin{equation}\label{phi}
    \phi(q) := f(q, q) = \sum_{m=-\infty}^{\infty} q^{m^2} = \dfrac{g_2^5}{g_1^2g_4^2},
\end{equation}
\begin{equation}\label{psi}
    \psi(q) := f(q, q^3) = \sum_{m=0}^{\infty} q^{m(m+1)/2} = \dfrac{g_2^2}{g_1},
\end{equation}
and 
\begin{equation}\label{f}
    f(-q) := f(-q, -q^2) = \sum_{m=-\infty}^{\infty} (-1)^m q^{m(3m-1)/2} = g_1.
\end{equation}
From \eqref{f}, it is easily seen that
\begin{equation}\label{eptt}
    q\dfrac{d}{dq}g_1=q\dfrac{d}{dq}(q;q)_\infty= \sum_{m=-\infty}^{\infty} (-1)^m\dfrac{m(3m-1)}{2} q^{m(3m-1)/2}. 
\end{equation}
The product representations of the  special cases \eqref{phi}-\eqref{f} of $f(\alpha, \beta)$ are the consequences of the Jacobi's triple product \cite[ p. 35, Entry 19]{BBC} given by
$$f(\alpha, \beta) = (-\alpha; \alpha\beta)_\infty(-\beta; \alpha\beta)_\infty(\alpha\beta; \alpha\beta)_\infty.$$
The Jacobi triple product \cite[Theorem 1.2.1]{ramurty} can also be stated as
\begin{equation}\label{jtp1}
\sum_{n=-\infty}^\infty z^n q^{n^2}=\prod_{m=1}^{\infty}(1-q^{2m})(1+zq^{2m-1})(1+z^{-1}q^{2m-1}).
\end{equation}
The following Jacobi's identity \cite[Theorem 1.3.9]{bc} will also be useful,
\begin{equation}\label{jtp}
g_{1}^{3} = \sum_{n=0}^{\infty}(-1)^{n}(2n+1)q^{n(n+1)/2}. 
\end{equation}
Further recall that, the Lambert series for the number of divisors of $n$,  $d(n)$, is given by 
\begin{equation}\label{d(n)}
    \sum_{n=1}^{\infty}d(n)q^n=\sum_{n=1}^{\infty}\dfrac{q^n}{1-q^n}
\end{equation}
and Lambert series for the sum of divisors of $n$, $\sigma(n)$, is given by 
	\begin{equation}\label{sum}
	\sum_{n=1}^\infty \dfrac{n q^{n}}{1 - q^{n}}=\sum_{n=1}^\infty \sigma(n)q^n.	
    \end{equation}
Therefore, if $d_{k,j}(n),$ denotes the number of positive divisors of $n$ such that $d\equiv j\pmod k$, then 
\begin{equation}\label{djk(n)}
    \sum_{n=1}^{\infty}d_{k,j}(n)q^n=\sum_{n=0}^{\infty}\dfrac{q^{kn+j}}{1-q^{kn+j}}.
\end{equation}
Also by taking logarithim on both sides of \eqref{e1}, then differentiating both sides with respect to $q$ and simplifying using \eqref{sum}, we obtain
\begin{equation}\label{sig1}
    -\dfrac{q\dfrac{d}{dq}(q;q)_\infty}{(q;q)_\infty}=\sum_{n=1}^\infty \sigma(n)q^n.
\end{equation}

In addition to above,following dissection formulas will be used in proving the congruences:
\begin{lemma}\cite{bk}\label{dsctn}
We have
\begin{equation}\label{d271}
g_1^4 = \frac{g_4^{10}}{g_2^2 g_8^4} - 4q \frac{g_2^2 g_8^4}{g_4^2}.
\end{equation}
\begin{equation}\label{d2}
\frac{g_1^2}{g_2} = \frac{g_9^2}{g_{18}} - 2q \frac{g_3 g_{18}^2}{g_6 g_9}.
\end{equation}
\end{lemma}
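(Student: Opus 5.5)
The two identities in Lemma~\ref{dsctn} are classical dissections. I would prove both by writing the left-hand sides as theta functions, dissecting the series, and converting back to products with the Jacobi triple product. In terms of \eqref{phi}, the key observation is
$$\phi(-q)=\sum_{m=-\infty}^{\infty}(-1)^mq^{m^2}=\dfrac{g_1^2}{g_2},$$
which follows from \eqref{phi} by $q\to -q$, using $(-q;-q)_\infty=g_2^3/(g_1g_4)$. Hence $g_1^2/g_2=\phi(-q)$ and $g_1^4=g_2^2\,\phi(-q)^2$.

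\textbf{Identity \eqref{d2}.} I split the sum for $\phi(-q)$ according to $m \bmod 3$.
\begin{itemize}
\item For $m=3k$ the terms are $(-1)^kq^{9k^2}$, which sum to $\phi(-q^9)=g_9^2/g_{18}$.
\item For $m=3k+1$ the terms are $-(-1)^kq^{9k^2+6k+1}$.
\item For $m=3k-1$, the substitution $k\to -k$ produces the same series as for $m=3k+1$.
\end{itemize}
The two nonzero residue classes therefore contribute
$$-2q\sum_{k}(-1)^kq^{9k^2+6k}=-2q\,f(-q^3,-q^{15}).$$
By the Jacobi triple product,
$$f(-q^3,-q^{15})=(q^3;q^{18})_\infty(q^{15};q^{18})_\infty(q^{18};q^{18})_\infty.$$
To simplify this, note that
$$(q^3;q^{18})_\infty(q^9;q^{18})_\infty(q^{15};q^{18})_\infty=(q^3;q^6)_\infty=\dfrac{g_3}{g_6}, \qquad (q^9;q^{18})_\infty=\dfrac{g_9}{g_{18}}.$$
Dividing the first by the second gives $(q^3;q^{18})_\infty(q^{15};q^{18})_\infty=g_3g_{18}/(g_6g_9)$. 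Hence $f(-q^3,-q^{15})=g_3g_{18}^2/(g_6g_9)$, which gives \eqref{d2}.

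\textbf{Identity \eqref{d271}.} I use the standard 2-dissection
$$\phi(q)=\phi(q^4)+2q\psi(q^8),$$
obtained by splitting $\sum q^{m^2}$ by the parity of $m$: odd $m=2k+1$ gives $q\sum_k q^{4k^2+4k}=2q\psi(q^8)$. Replacing $q\to -q$ gives $\phi(-q)=\phi(q^4)-2q\psi(q^8)$, and squaring,
$$\phi(-q)^2=\phi(q^4)^2+4q^2\psi(q^8)^2-4q\,\phi(q^4)\psi(q^8).$$
To collapse the first two terms I need the identity $\phi(q)^2=\phi(q^2)^2+4q\psi(q^4)^2$ (Berndt, Entry 25). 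Taken with $q\to q^2$, it gives
$$\phi(q^4)^2+4q^2\psi(q^8)^2=\phi(q^2)^2,$$
and hence $\phi(-q)^2=\phi(q^2)^2-4q\,\phi(q^4)\psi(q^8)$.

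Now multiply by $g_2^2$ and convert with \eqref{phi} and \eqref{psi}:
\begin{itemize}
\item $g_2^2\,\phi(q^2)^2=g_2^2\cdot g_4^{10}/(g_2^4g_8^4)=g_4^{10}/(g_2^2g_8^4)$;
\item $\phi(q^4)\psi(q^8)=\dfrac{g_8^5}{g_4^2g_{16}^2}\cdot\dfrac{g_{16}^2}{g_8}=\dfrac{g_8^4}{g_4^2}$.
\end{itemize}
This yields \eqref{d271}.

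The only step that is not pure bookkeeping is the auxiliary identity $\phi(q)^2=\phi(q^2)^2+4q\psi(q^4)^2$. I would cite it from \cite{BBC}. Alternatively, it follows from $\phi(q)^2=\sum_{m,n}q^{m^2+n^2}$ via the change of variables $(m,n)\mapsto(m+n,m-n)$. This map sends lattice points with $m+n$ even bijectively onto pairs $(u,v)$ with $u^2+v^2$ halved, producing $\phi(q^2)^2$. The pairs with $m+n$ odd give $4q\psi(q^4)^2$ after writing $m+n=2a+1$ and $m-n=2b+1$.
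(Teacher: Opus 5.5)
The paper gives no proof of this lemma; it quotes both identities from \cite{bk}. Your proposal is a correct, self-contained derivation of both identities. The product conversions all check out:
\begin{itemize}
\item $\phi(-q)=g_1^2/g_2$, via $(-q;-q)_\infty=g_2^3/(g_1g_4)$.
\item Splitting $\sum_m(-1)^mq^{m^2}$ by $m$ modulo $3$ gives $\phi(-q^9)-2q\,f(-q^3,-q^{15})$, and $f(-q^3,-q^{15})=g_3g_{18}^2/(g_6g_9)$. This is \eqref{d2}.
\item $g_2^2\phi(q^2)^2=g_4^{10}/(g_2^2g_8^4)$ and $g_2^2\phi(q^4)\psi(q^8)=g_2^2g_8^4/g_4^2$ together give \eqref{d271}.
\end{itemize}
Compared with the citation, your route makes the lemma depend only on the Jacobi triple product and the classical identity $\phi(q)^2=\phi(q^2)^2+4q\psi(q^4)^2$. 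You also sketch an elementary lattice-point proof of that identity.

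Two small remarks.

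First, the detour through $\phi(q)=\phi(q^4)+2q\psi(q^8)$ and the subsequent squaring is unnecessary. Replacing $q$ by $-q$ in the classical identity gives $\phi(-q)^2=\phi(q^2)^2-4q\psi(q^4)^2$ directly. Moreover, $\psi(q^4)^2=g_8^4/g_4^2$ is the same product you obtained through $\phi(q^4)\psi(q^8)$.

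Second, state the even case of the lattice-point argument precisely. The map $(m,n)\mapsto(a,b)=((m+n)/2,(m-n)/2)$ is a bijection from $\{(m,n): m+n \text{ even}\}$ onto $\mathbb{Z}^2$, and $m^2+n^2=2(a^2+b^2)$; this is what produces $\phi(q^2)^2$. The phrase ``with $u^2+v^2$ halved'' does not describe the image correctly. Your treatment of the odd case, via $m+n=2a+1$ and $m-n=2b+1$ yielding $4q\psi(q^4)^2$, is fine as written.
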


\begin{lemma}\label{RR5}
\cite{m5h} We have
\begin{align}
  \frac{1}{(q; q)_\infty} = &\frac{(q^{25}; q^{25})_\infty^5}{(q^5; q^5)_\infty^6} (F^{-4}(q^5) + qF^{-3}(q^5)+ 2q^2 F^{-2}(q^5) + 3q^3 F^{-1}(q^5) + 5q^4 - 3q^5 F(q^5)\notag\\ 
  &\label{RR5e}+ 2q^6 F^2(q^5) - q^7 F^3(q^5) + q^8 F^4(q^5)), 
\end{align}
\end{lemma}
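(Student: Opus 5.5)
The plan is to read Lemma~\ref{RR5} as Ramanujan's classical $5$-dissection of the partition generating function, with $F(q)=(q,q^4;q^5)_\infty/(q^2,q^3;q^5)_\infty$ (so $q^{1/5}F(q)$ is the Rogers--Ramanujan continued fraction). The proof has three steps.

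\textbf{Step 1: dissect $(q;q)_\infty$.} Start from Euler's pentagonal number series \eqref{f}, $g_1=\sum_m(-1)^mq^{m(3m-1)/2}$. Split the summation index $m$ by its residue modulo $5$. The pentagonal numbers $m(3m-1)/2$ are only $\equiv 0,1,2 \pmod 5$, and exactly one residue class of $m$ gives exponents $\equiv 1\pmod 5$. The terms with exponent $\equiv 1\pmod 5$ give $-q\,g_{25}$ directly. The other two classes each combine two residue classes of $m$ into a theta series $f(\alpha,\beta)$ in $q^5$ as in \eqref{gtheta}. Applying the Jacobi triple product to these series and using the quintuple-product identity, one obtains
\begin{equation*}
(q;q)_\infty=(q^{25};q^{25})_\infty\bigl(a-q-q^2b\bigr),\qquad a=F^{-1}(q^5),\ b=F(q^5),\ ab=1.
\end{equation*}
Equivalently, one can verify that the product sides agree by using the product forms of $F$.

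\textbf{Step 2: rationalize the denominator.} Let $\omega$ be a primitive fifth root of unity. Replacing $q$ by $\omega^jq$ leaves $a$, $b$ and $g_{25}$ unchanged. Also
\begin{equation*}
\prod_{j=0}^{4}(\omega^jq;\omega^jq)_\infty=\frac{(q^5;q^5)_\infty^6}{(q^{25};q^{25})_\infty},
\end{equation*}
which follows by grouping factors $1-q^n$ according to whether $5\mid n$. Therefore
\begin{equation*}
\prod_{j=0}^4\bigl(a-\omega^jq-\omega^{2j}q^2b\bigr)=\frac{g_5^6}{g_{25}^6}.
\end{equation*}
This yields
\begin{equation*}
\frac1{(q;q)_\infty}=\frac{1}{g_{25}}\cdot\frac{\prod_{j=1}^4(a-\omega^jq-\omega^{2j}q^2b)}{g_5^6/g_{25}^6}=\frac{g_{25}^5}{g_5^6}\prod_{j=1}^4\bigl(a-\omega^jq-\omega^{2j}q^2b\bigr).
\end{equation*}
This already produces the prefactor $g_{25}^5/g_5^6$ in \eqref{RR5e}.

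\textbf{Step 3: expand the quartic product.} Write the remaining product as a polynomial in $q$ of degree $8$, with coefficients that are Laurent monomials in $a,b$. The cleanest way is to note that it equals $\bigl(a^5-11q^5-q^{10}b^5\bigr)/(a-q-q^2b)$, where the numerator is the norm $\prod_{j=0}^{4}(a-\omega^jq-\omega^{2j}q^2b)$ computed formally using $ab=1$. Polynomial long division by $a-q-q^2b$ then gives
\begin{equation*}
a^4+qa^3+2q^2a^2+3q^3a+5q^4-3q^5b+2q^6b^2-q^7b^3+q^8b^4.
\end{equation*}
Each coefficient can be checked by comparing powers of $q$, using $ab=1$ at each stage. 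Substituting $a=F^{-1}(q^5)$ and $b=F(q^5)$ gives \eqref{RR5e}.

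\textbf{Main obstacle.} I expect the main difficulty to be Step 1, namely identifying the two theta series with $g_{25}F^{\mp1}(q^5)$. The cleanest route is to write both sides as products via Jacobi's triple product $f(\alpha,\beta)=(-\alpha;\alpha\beta)_\infty(-\beta;\alpha\beta)_\infty(\alpha\beta;\alpha\beta)_\infty$. The norm computation in Step 2 is routine once the root-of-unity product is recognized.
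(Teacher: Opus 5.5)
The paper gives no proof of this lemma: it quotes the identity from \cite{m5h} and uses it as a black box in the mod $5$ theorem. Your proposal replaces that citation with a self-contained derivation. It is the classical one (Ramanujan's $5$-dissection of $g_1$, then multiplying by the four conjugates under $q\mapsto\omega^jq$), and it is correct.

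Both computational claims check out:
\begin{itemize}
\item Factoring $a-y-by^2=-b(y-r_1)(y-r_2)$ with $r_1+r_2=-1/b$ and $r_1r_2=-a/b$ gives the norm $a^5-(1+5ab+5a^2b^2)q^5-b^5q^{10}=a^5-11q^5-q^{10}b^5$.
\item Multiplying your degree-$8$ quotient by $a-q-q^2b$ returns this norm coefficient by coefficient.
\end{itemize}
The cancellation is legitimate because $a-q-q^2b=g_1/g_{25}$ is an invertible power series. Compared with the citation, your route is longer but makes the lemma checkable inside the paper. As a bonus, Step 2 gives the companion identity $F^{-5}(q)-11q-q^2F^{5}(q)=g_1^6/g_5^6$ (after $q^5\to q$) for free.

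The one place to tighten is Step 1, and your closing remark about it is misleading. The classes $m\equiv 0,2\pmod 5$ do not merge into a single $f(\alpha,\beta)$, because the index set is not an arithmetic progression. Instead:
\begin{itemize}
\item the classes $m\equiv 0,2$ give $f(-q^{35},-q^{40})+q^5f(-q^{10},-q^{65})$;
\item the classes $m\equiv 3,4$ give $-q^2\bigl(f(-q^{20},-q^{55})-q^5f(-q^5,-q^{70})\bigr)$.
\end{itemize}
Jacobi's triple product turns each of these four series into a product, but it does not by itself combine a sum of two such products into one. That is exactly the job of the quintuple product identity
\[
\sum_{n=-\infty}^{\infty}q^{n(3n+1)/2}\bigl(z^{3n}-z^{-3n-1}\bigr)=(q;q)_\infty(zq;q)_\infty(z^{-1};q)_\infty(qz^2;q^2)_\infty(qz^{-2};q^2)_\infty ,
\]
applied with $q\to q^{25}$ and $z=-q^{-5}$, respectively $z=-q^{-10}$.

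To match the product sides, rewrite $F^{\mp 1}(q^5)$ using $(x;Q)_\infty(-x;Q)_\infty=(x^2;Q^2)_\infty$:
\begin{itemize}
\item $g_{25}F^{-1}(q^5)=g_{25}(-q^5;q^{25})_\infty(-q^{20};q^{25})_\infty(q^{15};q^{50})_\infty(q^{35};q^{50})_\infty$;
\item $g_{25}F(q^5)=g_{25}(-q^{10};q^{25})_\infty(-q^{15};q^{25})_\infty(q^{5};q^{50})_\infty(q^{45};q^{50})_\infty$.
\end{itemize}
With these specializations written out, Step 1 gives $g_1=g_{25}\bigl(F^{-1}(q^5)-q-q^2F(q^5)\bigr)$ rigorously, and the rest of your argument goes through unchanged.
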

where $F(q) := q^{-1/5}R(q) $ and $R(q) $ is Rogers–Ramanujan continued fraction defined by
$$
R(q) := q^{1/5}\frac{(q;q^5)_\infty(q^4;q^5)_\infty}{(q^2;q^5)_\infty(q^3;q^5)_\infty}=\dfrac{q^{1/5}}{1}_+\dfrac{q}{1}_+\dfrac{q^2}{1}_+ \dfrac{q^3}{1}_{+\cdots} , \quad |q| < 1.
$$

Next two lemmas follow from the Jacobi's identity \eqref{jtp}.
\begin{lemma}\label{m7}
\cite[Lemma 2.1]{mh7} We have
\begin{equation}\label{m7E1}
  g_1^6\equiv \left(J_0(q^7) + qJ_1(q^7) + q^3J_3(q^7)\right)^2 \pmod{7},
\end{equation}
where $ J_i, i = 0, 1, 3 $, re power series with integral powers of $q^7$.
\end{lemma}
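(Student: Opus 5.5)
Write $g_1^6=(g_1^3)^2$ and use Jacobi's identity \eqref{jtp}, $g_1^3=\sum_{n\ge0}(-1)^n(2n+1)q^{n(n+1)/2}$; this is the only input needed. The plan is to split the series for $g_1^3$ according to the residue of the exponent $n(n+1)/2$ modulo $7$, show that modulo $7$ only the residues $0,1,3$ survive, and then square.

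First I tabulate $n(n+1)/2 \pmod 7$ as $n$ runs over $0,1,\dots,6 \pmod 7$. The values are $0,1,3,6,3,1,0$. So the exponent is never $\equiv 2,4,5 \pmod 7$, and it is $\equiv 6$ only when $n\equiv 3\pmod 7$. In exactly that case the coefficient $2n+1\equiv 7\equiv 0\pmod 7$. Hence, modulo $7$, every term of $g_1^3$ whose exponent is $\equiv 6$ vanishes.

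Next I group the surviving terms by residue class of the exponent:
\[
g_1^3 \equiv \sum_{\substack{n\ge0\\ n(n+1)/2\equiv 0 \ (7)}} (-1)^n(2n+1)q^{n(n+1)/2}+\sum_{\substack{n\ge0\\ n(n+1)/2\equiv 1\ (7)}}\cdots+\sum_{\substack{n\ge0\\ n(n+1)/2\equiv 3\ (7)}}\cdots \pmod 7 .
\]
The first sum has the form $J_0(q^7)$. After factoring out $q$, the second has the form $qJ_1(q^7)$, and after factoring out $q^3$, the third has the form $q^3J_3(q^7)$. Each $J_i$ is a power series in $q^7$ with integer coefficients, since the $q$-exponents remaining after the shift are multiples of $7$. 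Squaring both sides of the congruence is legitimate because congruence of integer power series modulo $7$ is preserved under multiplication, and this gives \eqref{m7E1}.

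There is no serious obstacle. The only step needing care is the residue check: confirming that $n\equiv 3$ is the unique class producing exponent $\equiv 6$, and that it is exactly the class where $7\mid 2n+1$. That check is a finite computation over seven residues.
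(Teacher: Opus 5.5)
Your proof is correct and is the argument the paper intends; the paper gives no details beyond remarking that the lemma follows from Jacobi's identity \eqref{jtp}. Your residue check is exactly that derivation: $n(n+1)/2 \equiv 0,1,3,6 \pmod 7$, the residue $6$ occurs only for $n\equiv 3 \pmod 7$, and there $7\mid 2n+1$, so $g_1^3\equiv J_0(q^7)+qJ_1(q^7)+q^3J_3(q^7)\pmod 7$, and squaring gives the lemma.
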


\begin{lemma}\cite{m11h} \label{m11}
We have
\begin{equation}\label{l11E1}
  g_1^3\equiv \mathcal{J}_0(q^{11})+q\mathcal{J}_1(q^{11})+q^3\mathcal{J}_3(q^{11})+q^6\mathcal{J}_6(q^{11})+q^{10}\mathcal{J}_{10}(q^{11}) \pmod{11},
\end{equation}
where $\mathcal{J}_i,$ $i = 0,$ $1,$ $3,$ $6,$ $10$ are power series with integral powers of $q^{11}$.
\end{lemma}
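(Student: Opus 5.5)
The plan is to derive this directly from Jacobi's identity \eqref{jtp},
\[
g_1^3=\sum_{n=0}^{\infty}(-1)^n(2n+1)q^{n(n+1)/2},
\]
by sorting its terms according to the residue of the exponent $n(n+1)/2$ modulo $11$. Every term is then placed into one of finitely many classes. We will show that the only class surviving modulo $11$ outside the residues $\{0,1,3,6,10\}$ is one whose coefficients are all divisible by $11$.

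The key observation is the identity $8\cdot\frac{n(n+1)}{2}=(2n+1)^2-1$. Since $8$ is invertible modulo $11$ with $8^{-1}\equiv 7$, we get
\[
\frac{n(n+1)}{2}\equiv 7\bigl((2n+1)^2-1\bigr)\pmod{11}.
\]
The quadratic residues modulo $11$ (including $0$) are $0,1,3,4,5,9$. Applying $s\mapsto 7(s-1)\bmod 11$ to these gives the following exponent residues:
\begin{itemize}
\item $s=1$ gives $0$;
\item $s=3$ gives $3$;
\item $s=4$ gives $10$;
\item $s=5$ gives $6$;
\item $s=9$ gives $1$;
\item $s=0$ gives $4$.
\end{itemize}
Hence every exponent $n(n+1)/2$ is congruent modulo $11$ to one of $0,1,3,4,6,10$.

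The residue $4$ arises only when $(2n+1)^2\equiv 0\pmod{11}$, that is, when $11\mid 2n+1$. In that case the coefficient $(-1)^n(2n+1)$ is itself divisible by $11$, so all such terms vanish modulo $11$. This is the one point needing care, and it is easily handled.

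For the remaining residues $i\in\{0,1,3,6,10\}$, we collect the terms with $n(n+1)/2\equiv i\pmod{11}$ and write them as
\[
q^i\sum (-1)^n(2n+1)q^{\,n(n+1)/2-i}.
\]
Each exponent $n(n+1)/2-i$ is a nonnegative multiple of $11$, because the smallest exponent in each class is at least $i$ (the classes begin with $n=0,1,2,3,4$, giving exponents $0,1,3,6,10$). So each inner sum is a power series $\mathcal{J}_i(q^{11})$ in integral powers of $q^{11}$ with integer coefficients. Summing over the five classes yields \eqref{l11E1}.
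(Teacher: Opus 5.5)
Your proof is correct and is the argument the paper intends when it says this lemma follows from Jacobi's identity \eqref{jtp}; the paper itself just cites \cite{m11h}. The idea is that $n(n+1)/2$ modulo $11$ lies in $\{0,1,3,4,6,10\}$, and the residue-$4$ class occurs exactly when $11\mid 2n+1$, so those terms vanish modulo $11$. Your check via $8\cdot\frac{n(n+1)}{2}=(2n+1)^2-1$, together with the nonnegativity of the shifted exponents, supplies the details the paper omits.
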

\begin{lemma}
    \cite[Corollary, p.49]{BBC} We have 
    \begin{align}
\phi(q)&=\label{newpsi1}\phi(q^9)+2qf(q^3,q^{15})\\
&=\label{newpsi2}\phi(q^{25})+2qf(q^{15},q^{35})+2q^4f(q^5,q^{45})
        \end{align}
\end{lemma}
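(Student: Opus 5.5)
The plan is to derive both identities in the last lemma from the series definition \eqref{phi}, $\phi(q)=\sum_{m=-\infty}^{\infty}q^{m^2}$, by sorting the summation index $m$ into residue classes. Each class gives a sum that can be recognized as a special case of Ramanujan's general theta function \eqref{gtheta}. The only facts I need are the standard symmetries $f(\alpha,\beta)=f(\beta,\alpha)$ and $f(-m\text{-shifted})$, that is, reindexing $m\mapsto -m$ or $m\mapsto m\pm1$ inside \eqref{gtheta}.

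\textbf{First identity \eqref{newpsi1}.} Write $m=3k+r$ with $r\in\{-1,0,1\}$.
\begin{itemize}
\item For $r=0$, we get $\sum_k q^{9k^2}=\phi(q^9)$.
\item For $r=\pm1$, we get $(3k\pm1)^2=9k^2\pm6k+1$. The map $k\mapsto -k$ identifies the two classes, so together they contribute $2q\sum_k q^{9k^2+6k}$.
\end{itemize}
It remains to check $\sum_k q^{9k^2+6k}=f(q^3,q^{15})$. From \eqref{gtheta}, $f(q^3,q^{15})=\sum_k q^{3k(k+1)/2+15k(k-1)/2}=\sum_k q^{9k^2-6k}$. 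Replacing $k$ by $-k$ gives the required form.

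\textbf{Second identity \eqref{newpsi2}.} Write $m=5k+r$ with $r\in\{0,\pm1,\pm2\}$.
\begin{itemize}
\item For $r=0$, we get $\phi(q^{25})$.
\item For $r=\pm1$, the two classes pair up under $k\mapsto-k$ and give $2q\sum_k q^{25k^2+10k}$.
\item For $r=\pm2$, they pair up the same way and give $2q^4\sum_k q^{25k^2+20k}$.
\end{itemize}
Now compare exponents with the general formula $f(q^a,q^b)=\sum_k q^{\frac{a+b}{2}k^2+\frac{a-b}{2}k}$:
\begin{itemize}
\item $f(q^{15},q^{35})$ has exponent $25k^2-10k$, which matches the $r=\pm1$ sum after $k\mapsto-k$.
\item $f(q^5,q^{45})$ has exponent $25k^2-20k$, which matches the $r=\pm2$ sum after $k\mapsto -k$.
\end{itemize}

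Both identities are routine once the residue-class decomposition is written down. The only step needing care is the bookkeeping of the linear terms and the signs: I must confirm that the pairing of the classes $r$ and $-r$ is exact, so that the factor $2$ is justified. I must also confirm that the parameters $(a,b)$ are ordered consistently with \eqref{gtheta}. Since $f$ is symmetric, the ordering is harmless. All series converge absolutely for $|q|<1$, so the rearrangement into residue classes is legitimate.
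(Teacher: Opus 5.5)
Your residue-class computation is correct, and it is essentially the argument behind the cited result. The paper gives no proof of its own and simply quotes Berndt, who obtains these identities by specializing the general $n$-dissection of $f(a,b)$ (Entry 31, itself proved by splitting the summation index modulo $n$) to $a=b=q$ with $n=3$ and $n=5$. Your version carries out that specialization directly, pairing the classes $r$ and $-r$ to produce the factors of $2$.
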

Following lemma follows from \eqref{ie1} and the binomial theorem: 
\begin{lemma}\label{modp}
	For any prime $p$, we have
	$$g_p\equiv g_1^p\pmod{p}.$$
	$$g_{p}^{p}\equiv g_1^{p^2}\pmod{p^2}.$$
\end{lemma}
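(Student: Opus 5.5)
The statement to prove is Lemma~\ref{modp}: $g_p\equiv g_1^p\pmod p$ and $g_p^p\equiv g_1^{p^2}\pmod{p^2}$. I will work coefficientwise in $\mathbb{Z}[[q]]$. Since $g_1$ and $g_p$ are infinite products, I first truncate modulo $q^N$: each congruence of formal power series reduces to a congruence between finite products $\prod_{m\le N}$. So it suffices to prove the corresponding statements factor by factor, namely
\[
(1-q^{pm})\equiv(1-q^m)^p \pmod p, \qquad (1-q^{pm})^p\equiv (1-q^m)^{p^2}\pmod{p^2},
\]
and then multiply the factor congruences together.

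For the first congruence, I expand by the binomial theorem:
\[
(1-x)^p=\sum_{k=0}^p\binom pk(-x)^k,
\]
and $p\mid\binom pk$ for $0<k<p$. This gives $(1-x)^p\equiv 1+(-1)^px^p\pmod p$. For odd $p$ the right side is $1-x^p$. For $p=2$ it is $1+x^2$, which is $\equiv 1-x^2\pmod 2$. Putting $x=q^m$ gives the factor congruence, and taking the product over $m$ gives $g_1^p\equiv g_p\pmod p$.

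For the second congruence I use the standard lifting step: if $A\equiv B\pmod{p}$ in $\mathbb{Z}[[q]]$, then $A^p\equiv B^p\pmod{p^2}$. To see this, write $A=B+pC$ and expand:
\[
A^p=B^p+\binom p1B^{p-1}pC+\sum_{k\ge2}\binom pk B^{p-k}p^kC^k.
\]
Every term after $B^p$ is divisible by $p^2$. Applying this with $A=g_p$ and $B=g_1^p$, using the first congruence, yields $g_p^p\equiv g_1^{p^2}\pmod{p^2}$.

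There is no real obstacle here. The only points needing care are that the congruences make sense for formal power series with integer coefficients (closed under products and hence under the truncated infinite products), and the sign for $p=2$ in the first congruence.
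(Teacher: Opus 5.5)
Your proof is correct and is essentially the paper's argument: the paper justifies the lemma only by citing the product definition \eqref{ie1} and the binomial theorem. You supply the details the paper omits, namely the factorwise expansion $(1-x)^p\equiv 1-x^p \pmod{p}$ (including the sign check for $p=2$) and the binomial lifting step $A\equiv B\pmod{p}\Rightarrow A^p\equiv B^p\pmod{p^2}$.
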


\section{Congruences for the Kimberling partition functions}
Recall that, for $k\in\mathbb{N}\cup \lbrace0\rbrace$ and $m\in\mathbb{Z},$ pentagonal numbers $P_m$ and triangular numbers $T_k$ are respectively defined by  $P_m=\dfrac{m(3m-1)}{2}$ and $T_k=\dfrac{k(k+1)}{2}$. Throughout this section, we will use the symbol $\lceil x\rceil $ and $\lfloor x\rfloor$ for the ceiling and floor function respectively (that is, smallest integer greater than or equal to $x$ and the smallest integer less than or equal to $x$ respectively).
\begin{theorem}For any integers $n$, $k>0,$ we have
    \begin{equation}\label{ept1e1}\hspace{-4cm}(i)~\sum_{m=\lfloor\frac{1-\sqrt{1+24n}}{6}\rfloor+1}^{\lceil\frac{1+\sqrt{1+24n}}{6}\rceil-1}(-1)^m K_{\leq}(n-P_m)=\begin{cases}
 k(-1)^{k-1},\quad \text{if $n=P_{\pm k}$ } \\
 0,  \hspace{1.8cm}\text{otherwise}.
\end{cases}   
    \end{equation}
    \begin{align}\hspace{-.38cm}(ii)~\sum_{m=\lfloor\frac{1-\sqrt{1+24(n+1)}}{6}\rfloor+1}^{\lceil\frac{1+\sqrt{1+24(n+1)}}{6}\rceil-1}(-1)^m K_{<}(n-P_m+1)-&\sum_{m=\lfloor\frac{1-\sqrt{1+24n}}{6}\rfloor+1}^{\lceil\frac{1+\sqrt{1+24n}}{6}\rceil-1}(-1)^m K_{<}(n-P_m)\notag\\
     &\label{ept1e2}=\begin{cases}
 (-1)^{k},\quad \text{if $n=P_{-k},$ or $n=3T_{k}$ } \\
 0,  \hspace{1.2cm}  \text{otherwise}.
\end{cases}   
    \end{align}
    \begin{equation}\label{ept1e3}\hspace{-3cm}(iii)~\sum_{m=\lfloor\frac{1-\sqrt{1+24n}}{6}\rfloor+1}^{\lceil\frac{1+\sqrt{1+24n}}{6}\rceil-1}(-1)^m K_{>}(n-P_m)=\begin{cases}
 (k-1)(-1)^{k},\quad \text{if $n=P_{\pm k}$ } \\
 0,  \hspace{2.3cm}  \text{otherwise}.
\end{cases}   
    \end{equation}

\end{theorem}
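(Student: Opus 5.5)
The plan is to multiply each of Andrews' generating functions \eqref{k<=2}, \eqref{k<2}, \eqref{k>2} by $g_1=(q;q)_\infty$, expand $g_1$ through Euler's pentagonal number theorem \eqref{f}, $g_1=\sum_{m=-\infty}^{\infty}(-1)^m q^{P_m}$, and compare coefficients of $q^n$ on both sides. Throughout we set $K_\ast(j)=0$ for $j\le 0$, since the generating functions start at $n=1$.

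The summation range is explained as follows. For the Cauchy product $g_1\sum_{j\ge1}K_\ast(j)q^j$, the coefficient of $q^n$ is $\sum_{m:\,P_m\le n}(-1)^mK_\ast(n-P_m)$. Solving $3m^2-m-2n\le 0$ gives
\[
\frac{1-\sqrt{1+24n}}{6}\le m\le \frac{1+\sqrt{1+24n}}{6}.
\]
The stated limits $\lfloor\cdot\rfloor+1$ and $\lceil\cdot\rceil-1$ give the integers strictly inside this interval. The endpoints themselves are included only when $P_m=n$, and then $K_\ast(0)=0$ contributes nothing, so the two ranges give the same sum. The pentagonal numbers $P_m$, $m\in\mathbb Z$, are pairwise distinct, so each term is read off unambiguously.

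For (i), \eqref{k<=2} gives
\[
g_1\sum_{n\ge1}K_\le(n)q^n=\sum_{k\ge1}(-1)^{k-1}k\bigl(q^{P_k}+q^{P_k+k}\bigr).
\]
Since $P_k+k=k(3k+1)/2=P_{-k}$, the right side equals $\sum_{k\ge1}(-1)^{k-1}k\,(q^{P_k}+q^{P_{-k}})$. The exponents $P_{\pm k}$, $k\ge1$, are all distinct, so the coefficient of $q^n$ is $k(-1)^{k-1}$ when $n=P_{\pm k}$ and $0$ otherwise. This is \eqref{ept1e1}. Part (iii) is identical, using \eqref{k>2}: the coefficient $(-1)^k(k-1)$ now sits at $q^{P_{\pm k}}$, which gives \eqref{ept1e3}.

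For (ii), write $A(q)=g_1\sum K_<(n)q^n=\sum a_nq^n$, where $a_n$ is the pentagonal sum in \eqref{ept1e2} with argument $n$. By \eqref{k<2},
\[
(1-q)A(q)=\sum_{j\ge0}(-1)^{j-1}\bigl(q^{3j(j-1)/2+1}-q^{3j(j-1)/2+2j+1}\bigr).
\]
The left side has coefficient $a_{n+1}-a_n$ at $q^{n+1}$, which is exactly the left side of \eqref{ept1e2}. On the right, $3j(j-1)/2=3T_{j-1}$ and $3j(j-1)/2+2j=j(3j+1)/2=P_{-j}$. Putting $k=j-1$ in the first family gives coefficient $(-1)^k$ at $q^{3T_k+1}$. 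The second family gives $-(-1)^{j-1}=(-1)^j$ at $q^{P_{-j}+1}$, with $j\ge1$ because the $j=0$ term vanishes. Matching exponents $n+1$ yields \eqref{ept1e2}.

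The only delicate point is whether the two exponent families in (ii) can coincide. The equation $k(3k+1)=3j(j+1)$ forces $3\mid k$; writing $k=3t$ gives $t(9t+1)=j(j+1)$. I would check that this has no solution with $t\ge1$, or else record that the two contributions add at such $n$. The remaining bookkeeping on the summation limits is routine.
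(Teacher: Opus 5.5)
Your proposal is correct and follows the paper's own route: multiply Andrews' generating functions by $(q;q)_\infty$ (and, for (ii), also by $(1-q)$), expand via the pentagonal number theorem, and compare coefficients. Your handling of the summation limits and of $P_k+k=P_{-k}$ spells out details the paper leaves implicit, and the disjointness check you deferred does go through, so the case formula in (ii) is well defined: $t(9t+1)=j(j+1)$ would give $(2j+1)^2=36t^2+4t+1$, which for $t\ge 1$ lies strictly between $(6t)^2$ and $(6t+1)^2$.
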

\begin{proof}(i)  \eqref{k<=2} can be written as
        \begin{equation}{\label{epT1e2}}
         (q;q)_\infty \sum_{n=1}^\infty K_{\leq}(n) q^n=\sum_{n=1}^\infty (-1)^{n-1} n q^{n(3n-1)/2} (1 + q^n).
        \end{equation} Employing \eqref{f} in \eqref{epT1e2}, we obtain
        \begin{equation}{\label{epT1e3}}
         \left(\sum_{m=-\infty}^{\infty} (-1)^m q^{m(3m-1)/2}\right)\left( \sum_{n=1}^\infty K_{\leq}(n) q^n\right)=\sum_{n=1}^\infty (-1)^{n-1} n q^{n(3n-1)/2} (1 + q^n),
        \end{equation}
        which on employing the definition of $P_m$ and taking $n>P_m$ gives
         \begin{equation}{\label{epT1e4}}
         \sum_{n=1}^{\infty}\left(\sum_{m=\lfloor\frac{1-\sqrt{1+24n}}{6}\rfloor+1}^{\lceil\frac{1+\sqrt{1+24n}}{6}\rceil-1}(-1)^m K_{\leq}(n-P_m)\right)q^n=\sum_{n=1}^\infty (-1)^{n-1} n q^{n(3n-1)/2} (1 + q^n).
        \end{equation}
 Comparing the coefficients of like powers of $q$ in both sides of \eqref{epT1e4}, we complete the proof.
    
\noindent (ii) \eqref{k<2} can be written as
\begin{equation}\label{T5e1}
\sum_{n=1}^\infty K_{<}(n) q^n =\frac{q}{(1-q)(q; q)_\infty} \left(\sum_{n=1}^\infty (-1)^{n}q^{3n(n+1)/2}+\sum_{n=0}^\infty (-1)^{n}q^{n(3n+1)/2}\right)
\end{equation}which is equivalent to
\begin{equation}\label{T12e1}
\sum_{n=1}^\infty K_{<}(n) q^{n-1}-\sum_{n=1}^\infty K_{<}(n) q^{n}  =\frac{1}{(q; q)_\infty} \left(\sum_{n=1}^\infty (-1)^{n}q^{3n(n+1)/2}+\sum_{n=0}^\infty (-1)^{n}q^{n(3n+1)/2}\right).
\end{equation}Employing \eqref{f}, the definition of $P_m$ and taking $n>P_m$ in \eqref{T12e1}, we obtain
$$
\hspace{-1cm}\sum_{n=1}^\infty\left(\sum_{m=\lfloor\frac{1-\sqrt{1+24(n+1)}}{6}\rfloor+1}^{\lceil\frac{1+\sqrt{1+24(n+1)}}{6}\rceil-1}(-1)^m K_{<}(n-P_m+1)-\sum_{m=\lfloor\frac{1-\sqrt{1+24n}}{6}\rfloor+1}^{\lceil\frac{1+\sqrt{1+24n}}{6}\rceil-1}(-1)^m K_{<}(n-P_m)\right)q^n$$
\begin{equation}\hspace{2cm}=\sum_{n=1}^\infty (-1)^{n}q^{3n(n+1)/2}+\sum_{n=0}^\infty (-1)^{n}q^{n(3n+1)/2}.
\end{equation} Comparing the coefficients of like powers of $q$ in both sides of \eqref{epT1e4}, we complete the proof of  (ii). 
Proof of (iii) is identitcal to the proof of (i) and follows from \eqref{k>2}.

\end{proof}

\begin{theorem}For any positive integer $n>1,$  we have
    $$\hspace{-3.5cm}(i)~
         K_{\leq}(n)=d_{3,1}(n)-d_{3,2}(n)+2\left(\sum_{m=1}^{\lfloor\frac{-1+\sqrt{1+24n}}{6}\rfloor}m(-1)^{m-1} p(n-P_{-m})\right).
    $$
   $$\hspace{-9.3cm}(ii)~
       K_{\leq}(n)\equiv d_{3,1}(n)-d_{3,2}(n)\pmod 2.
    $$
\end{theorem}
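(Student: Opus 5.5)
The plan is to start from Andrews' generating function \eqref{k<=2} and split each term $q^{P_n}(1+q^n)=q^{P_n}+q^{P_{-n}}$, since $P_n+n=n(3n+1)/2=P_{-n}$. Adding and subtracting $q^{P_{-n}}$, we rewrite
\begin{equation*}
\sum_{n=1}^\infty K_{\leq}(n)q^n=\frac{1}{(q;q)_\infty}\sum_{n=1}^\infty(-1)^{n-1}n\left(q^{P_n}-q^{P_{-n}}\right)+\frac{2}{(q;q)_\infty}\sum_{n=1}^\infty(-1)^{n-1}n\,q^{P_{-n}}.
\end{equation*}
The second piece is handled by \eqref{e1}: multiplying the series of $p(n)$ by $2\sum_{m\ge1}(-1)^{m-1}m\,q^{P_{-m}}$ and reading off the coefficient of $q^n$ gives exactly $2\sum_{m=1}^{\lfloor(-1+\sqrt{1+24n})/6\rfloor} m(-1)^{m-1}p(n-P_{-m})$. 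The upper limit is just the condition $P_{-m}\le n$.

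It then remains to show the key identity
\begin{equation*}
\sum_{n=1}^\infty(-1)^{n-1}n\left(q^{P_n}-q^{P_{-n}}\right)=(q;q)_\infty\sum_{k=0}^\infty\left(\frac{q^{3k+1}}{1-q^{3k+1}}-\frac{q^{3k+2}}{1-q^{3k+2}}\right).
\end{equation*}
By \eqref{djk(n)}, the right-hand side divided by $(q;q)_\infty$ is $\sum_{n\ge1}(d_{3,1}(n)-d_{3,2}(n))q^n$, which gives part (i).

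To prove the key identity, note that the left side equals $-\sum_{n\in\mathbb{Z}}(-1)^n n\,q^{P_n}$: indices $n>0$ give $-(-1)^{n}nq^{P_n}$, and indices $n=-k<0$ give $k(-1)^kq^{P_{-k}}$. Consider the one-variable deformation
\begin{equation*}
\sum_{n\in\mathbb{Z}}(-1)^nz^nq^{P_n}=(zq;q^3)_\infty(q^2/z;q^3)_\infty(q^3;q^3)_\infty .
\end{equation*}
This follows from the Jacobi triple product (the form $f(\alpha,\beta)$ in Section 2) with $\alpha=-zq$, $\beta=-q^2/z$, since $P_n=3n(n-1)/2+n$.

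Apply $z\frac{d}{dz}$ and set $z=1$. On the left this gives $\sum(-1)^n n q^{P_n}$. On the right we get the product at $z=1$, which is $(q;q^3)_\infty(q^2;q^3)_\infty(q^3;q^3)_\infty=(q;q)_\infty$, times its logarithmic derivative
\begin{equation*}
-\sum_{k\ge0}\frac{q^{3k+1}}{1-q^{3k+1}}+\sum_{k\ge0}\frac{q^{3k+2}}{1-q^{3k+2}}.
\end{equation*}
Changing sign gives the key identity. A quick check of the first coefficients ($q-q^2+0\cdot q^3+\cdots$ on both sides) confirms the signs. This logarithmic-differentiation step is the only real content; the main risk is bookkeeping of signs and of which of $P_{\pm n}$ carries which sign, so I would verify it against low-order terms.

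Part (ii) follows immediately from (i), because the $p$-sum is multiplied by $2$. The hypothesis $n>1$ (rather than $n\ge1$) plays no role beyond matching the indexing of the generating functions.
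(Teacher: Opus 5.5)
Your proposal is correct and is essentially the paper's proof. The paper also sets $f(z)=\sum_{n}(-1)^n z^n q^{P_n}=(zq;q^3)_\infty(q^2/z;q^3)_\infty(q^3;q^3)_\infty$, logarithmically differentiates at $z=1$ to express $-f'(1)/(q;q)_\infty$ as the $d_{3,1}-d_{3,2}$ Lambert series, and substitutes this into Andrews' formula for $K_{\leq}$, which leaves $\frac{2}{(q;q)_\infty}\sum_{n\ge1}(-1)^{n-1}nq^{P_{-n}}$ as the $p$-convolution. Your version only makes explicit the splitting of $q^{P_n}(1+q^n)$ and the sign reindexing that the paper leaves implicit.
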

\begin{proof}
Let \begin{equation}\label{t1e2}
    f(z)=\sum_{n=-\infty}^{\infty}(-1)^nz^nq^{n(3n-1)/2}.
\end{equation}
Differentiating \eqref{t1e2} with respect to $z$ and then substituting $z=1$ in it, we obtain
\begin{equation}\label{t1e3}
    f'(1)=\sum_{n=-\infty}^{\infty}(-1)^nnq^{n(3n-1)/2}.
\end{equation}
Substituting $q$ by $q^{1/2}$ and $z$ by $-zq^{1/2}$ in \eqref{jtp1}, we obtain 
\begin{equation}\label{t1e4}
    \sum_{n=-\infty}^{\infty}(-1)^nz^nq^{n(n+1)/2}=(zq;q)_\infty(1/z;q)_\infty(q;q)_\infty.
\end{equation}
Again, replacing $z$ by $z/q$ in \eqref{t1e4}, we obtain
\begin{equation}\label{t1e5}
    \sum_{n=-\infty}^{\infty}(-1)^nz^nq^{n(n-1)/2}=(z;q)_\infty(q/z;q)_\infty(q;q)_\infty.
\end{equation}
Using \eqref{t1e2} in \eqref{t1e5}, we obtain
\begin{equation}\label{t1ae7}
    f(z)=\sum_{n=-\infty}^{\infty}(-1)^n(zq)^n(q^3)^{n(n-1)/2}
    =(zq;q^3)_\infty(q^2/z;q^3)_\infty(q^3;q^3)_\infty.
\end{equation}
Taking logarithim on both sides of \eqref{t1ae7} and then diffrentiating it with respect to $z$ on both sides, we obtain 
\begin{equation}\label{t1e8}
    \dfrac{f'(z)}{f(z)}=-\sum_{k=0}^{\infty}\dfrac{q^{3k+1}}{1-zq^{3k+1}}+\sum_{k=0}^{\infty}\dfrac{q^{3k+2}}{z^2-zq^{3k+2}}.
\end{equation}
Putting $z=1$ in \eqref{t1e8} and then employing \eqref{f}, we obtain 
\begin{equation}\label{t1e9}
    -\dfrac{f'(1)}{(q;q)_\infty}=\sum_{k=0}^{\infty}\dfrac{q^{3k+1}}{1-q^{3k+1}}-\sum_{k=0}^{\infty}\dfrac{q^{3k+2}}{1-q^{3k+2}}.
\end{equation}
Employing \eqref{djk(n)}, \eqref{t1e3}, and \eqref{t1e9} in \eqref{k<=2}, we obtain
\begin{align}
    \sum_{n \geq 1} K_{\leq}(n) q^n=&\label{t1e11}\sum_{n=0}^{\infty}\left(\dfrac{q^{3n+1}}{1-q^{3n+1}}-\dfrac{q^{3n+2}}{1-q^{3n+2}}\right)+\dfrac{2}{(q;q)_\infty}\sum_{n=1}^{\infty}(-1)^{n-1}nq^{n(3n+1)/2}\\
    =&\sum_{n=0}^{\infty}\left(d_{3,1}(n)-d_{3,2}(n)\right)q^n+2\left(\sum_{n=0}^{\infty}p(n)q^n\right)\left(\sum_{n=1}^{\infty}(-1)^{n-1}nq^{n(3n+1)/2}\right)\\
=&\label{t1e10}\sum_{n=0}^{\infty}\left(d_{3,1}(n)-d_{3,2}(n)\right)q^n+2\sum_{n=2}^{\infty}\left(\sum_{m=1}^{\lfloor\frac{-1+\sqrt{1+24n}}{6}\rfloor}m(-1)^{m-1}p(n-P_{-m})\right)q^n.
\end{align}
Comparing the coefficients of the like powers of $q$ on both sides of \eqref{t1e10}, we  arrive at (i). (ii) follows immediately from (i).
\end{proof}

\begin{theorem} \label{thm1}We have\\
$(i)$~For $n>2,$ 
\begin{align}
  \sum_{m=1}^{n-1}K_{\leq}(n-m)\sigma(m)\equiv&\sum_{m=1}^{\lceil\frac{1+\sqrt{1+24n}}{6}\rceil-1}(-1)^{m}m P_mp(n-P_m)\notag\\
  &+\sum_{m=1}^{\lceil\frac{-1+\sqrt{1+24n}}{6}\rceil-1}(-1)^{m}mP_{-m}p(n-P_{-m})\pmod n. \notag
\end{align}
$(ii)$~For $n>3,$ 
\begin{align}
\hspace{.5cm} &\sum_{m=1}^{n-1}K_{<}(n-m)\sigma(m)-\sum_{m=1}^{n}K_{<}(n-m+1)\sigma(m)\notag\\
  &+\sum_{m= 1}^{\lceil\frac{-3+\sqrt{9+24n}}{6}\rceil-1}(-1)^{m-1}3T_mp(n-3T_m)+\sum_{m= 0}^{\lfloor\frac{-1+\sqrt{1+24n}}{6}\rfloor}(-1)^{m-1}P_{-m}p(n-P_{-m})\equiv0\pmod n.\notag
  \end{align}
$(iii)$~For $n>2,$ 
\begin{align}
  \sum_{m=1}^{n-1}K_{>}(n-m)\sigma(m)\equiv&\sum_{m=1}^{\lceil\frac{1+\sqrt{1+24n}}{6}\rceil-1}(-1)^{m-1}(m-1) P_mp(n-P_m)\notag\\
  &\label{T33E1}+\sum_{m=1}^{\lceil\frac{-1+\sqrt{1+24n}}{6}\rceil-1}(-1)^{m-1}(m-1)P_{-m}p(n-P_{-m})\pmod n. \notag
\end{align}
\end{theorem}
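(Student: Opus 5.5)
The plan is to apply the operator $q\frac{d}{dq}$ to each of the identities \eqref{k<=2}, \eqref{k<2} and \eqref{k>2}, written in the form ``generating function $=\frac{1}{(q;q)_\infty}\times$ theta-type series''. Logarithmic differentiation of $1/(q;q)_\infty$ is handled by \eqref{sig1}, and this produces the convolutions with $\sigma(m)$. Reducing coefficients modulo $n$ then removes the term $nK(n)$ coming from the left side.

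For (i), write $G(q)=\sum_{n\ge1}K_{\leq}(n)q^n$ and $S(q)=\sum_{m\ge1}(-1)^{m-1}mq^{P_m}(1+q^m)$. Note that $q^{P_m+m}=q^{P_{-m}}$, so $S(q)=\sum_{m\ge1}(-1)^{m-1}m\,(q^{P_m}+q^{P_{-m}})$. From $G=S/(q;q)_\infty$, differentiating and using \eqref{sig1} gives
\begin{equation*}
q\frac{d}{dq}G=G\sum_{m\ge1}\sigma(m)q^m+\frac{1}{(q;q)_\infty}\,q\frac{d}{dq}S .
\end{equation*}
Comparing coefficients of $q^n$ yields
\begin{equation*}
nK_{\leq}(n)=\sum_{m=1}^{n-1}K_{\leq}(n-m)\sigma(m)+\sum_{m\ge1}(-1)^{m-1}m\big(P_m\,p(n-P_m)+P_{-m}\,p(n-P_{-m})\big).
\end{equation*}
The left side is $\equiv0\pmod n$, which gives the stated congruence. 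The sign $(-1)^m$ in the statement arises from moving the sum to the other side. The summation ranges are exactly the conditions $P_{\pm m}\le n$; I would match them with the stated ceilings, where the term with $P_m=n$ carries the factor $p(0)$ times $P_m=n\equiv0$, so boundary conventions are harmless modulo $n$. Part (iii) is identical, using \eqref{k>2} with coefficient $(-1)^m(m-1)$ in place of $(-1)^{m-1}m$.

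For (ii), I would start from the form \eqref{T12e1}: $(q^{-1}-1)H(q)=T(q)/(q;q)_\infty$, where $H=\sum K_<(n)q^n$ and $T=\sum_{m\ge1}(-1)^mq^{3T_m}+\sum_{m\ge0}(-1)^mq^{P_{-m}}$. Setting $L(q)=\sum_n (K_<(n+1)-K_<(n))q^n$, apply $q\frac{d}{dq}$ and \eqref{sig1} in the same way:
\begin{equation*}
nL_n=\sum_m L_{n-m}\sigma(m)+\sum_m (-1)^m\big(3T_m\,p(n-3T_m)\big)+\sum_m(-1)^m P_{-m}\,p(n-P_{-m}).
\end{equation*}
Expanding $L_{n-m}=K_<(n-m+1)-K_<(n-m)$ splits the convolution into the two $\sigma$-sums of the statement, and reducing modulo $n$ gives (ii).

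The main obstacle is bookkeeping rather than ideas. One must check the signs and the exact upper limits, for instance that $\lceil(-3+\sqrt{9+24n})/6\rceil-1$ matches $3T_m\le n$ up to terms vanishing mod $n$. One must also track the index shift in the $K_<$ convolution, where $m$ runs to $n$ because $K_<(n-m+1)$ with $m=n$ gives $K_<(1)$; here one uses $K_<(0)=0$ and checks the contribution $L_0=K_<(1)$. I would verify these edge terms by hand for small $n$.
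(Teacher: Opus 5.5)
Your proposal is correct and follows essentially the same route as the paper. Both differentiate the identities \eqref{k<=2}, \eqref{k<2} (in the form \eqref{T12e1}) and \eqref{k>2}, use \eqref{sig1} to produce the $\sigma$-convolutions, compare coefficients to get the exact recurrences $nK_{\leq}(n)=\cdots$, $nK_{>}(n)=\cdots$ and $n\left(K_{<}(n+1)-K_{<}(n)\right)=\cdots$, and then reduce modulo $n$. For (ii), you differentiate $(q^{-1}-1)H(q)$ directly instead of log-differentiating $G_2$ with its extra $q$ and $(1-q)^{-1}$ factors; this is a tidier organization of the same computation, and it gives the correct left side of the recurrence \eqref{T32e4}, which the paper's display garbles.
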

\begin{proof}
(i)~ Let, $$G_1(q)=\sum_{n=1}^\infty K_{\leq}(n) q^n\quad \text{and }\quad P_1(q)=\sum_{n=1}^\infty (-1)^{n-1} n q^{n(3n-1)/2} (1 + q^n).$$
Taking logarithim on both sides of \eqref{k<=2}, we obtain
\begin{equation}\label{T3e1}
\text{log}\left(G_1(q)\right)=\text{log}\left(P_1(q)\right)-\sum_{m=1}^{\infty}\text{log}\left(1-q^m\right).
\end{equation}
Now differentiating \eqref{T3e1}  with respect to $q$ and then multiplying by $q$, we obtain 
\begin{equation}\label{T3e2}
    q\dfrac{G'_1(q)}{G_1(q)}=q\dfrac{P'_1(q)}{P_1(q)}+\sum_{m=1}^{\infty}\dfrac{mq^{m}}{\left(1-q^m\right)}.
\end{equation}
Employing \eqref{sum} in \eqref{T3e2} and then multipling both sides by $G_1(q)$, we obtain
\begin{align}
    \sum_{n \geq 1}n K_{\leq}(n) q^n&=\dfrac{1}{(q;q)_\infty}\sum_{m=1}^\infty (-1)^{m-1}\dfrac{m^2\left(3m-1\right)}{2} q^{m(3m-1)/2}\notag \\
    &\label{T3e3}\hspace{-1cm}+\dfrac{1}{(q;q)_\infty}\sum_{m=1}^\infty (-1)^{m-1}\dfrac{m^2\left(3m+1\right)}{2} q^{m(3m+1)/2}+\sum_{n>1}\left(\sum_{m=1}^{n-1}K_{\leq}(n-m)\sigma(m)\right)q^n.
\end{align}
Employing \eqref{e1} in \eqref{T3e3}, then multiplying the infinite series in the right hand side and comparing the coefficients of like powers of $q$ on both sides, we obtain that for $n>2$,
\begin{align}
   \hspace{-.5cm}nK_{\leq}(n)=&\sum_{m=1}^{n-1}K_{\leq}(n-m)\sigma(m)+ \sum_{m=1}^{\lceil\frac{1+\sqrt{1+24n}}{6}\rceil-1}(-1)^{m-1}m P_mp(n-P_m)\notag\\
   &\label{T3e4}\hspace{5cm}+\sum_{m=1}^{\lceil\frac{-1+\sqrt{1+24n}}{6}\rceil-1}(-1)^{m-1}mP_{-m}p(n-P_{-m}).
\end{align} 
Now (i) follows easily from \eqref{T3e4}.\\
(ii)~ Let, $$G_2(q)=\sum_{n=1}^\infty K_{<}(n) q^n \quad \text{and }\quad
P_2(q)=\sum_{n=1}^\infty (-1)^{n}q^{3n(n+1)/2}+\sum_{n=0}^\infty (-1)^{n}q^{n(3n+1)/2}.$$
Then by \eqref{k<2}, we obtain
\begin{equation}\label{T32e1}
   G_2(q)=\dfrac{q}{(1-q)(q; q)_\infty}P_2(q). 
\end{equation}
Taking logarithim on both sides of \eqref{T32e1} and then differentiating both sides with respect to $q,$ we obtain
\begin{equation}\label{T32e2}
   \dfrac{G'_2(q)}{G_2(q)}=\dfrac{1}{q}+\dfrac{P'_2(q)}{P_2(q)}+\dfrac{1}{1-q}+\sum_{m=1}^{\infty}\dfrac{mq^{m-1}}{\left(1-q^m\right)}. 
\end{equation}
Multiplying both sides of \eqref{T32e2} by $q$ and then simplifying by employing \eqref{e1}, \eqref{sum}, we obtain
\begin{align}
\sum_{n=1}^\infty n K_{<}(n) q^n=&\sum_{n=1}^\infty K_{<}(n) q^n
+\dfrac{q}{(1-q)}\sum_{n=1}^\infty K_{<}(n) q^n
+\sum_{n=2}^\infty\left(\sum_{m=1}^{n-1}K_{<}(n-m)\sigma(m)\right)q^n\notag\\
&\hspace{-.5cm}\label{T32e3}+\dfrac{q}{(1-q)}\left(\sum_{n=0}^\infty p(n)q^n\right)\left(\sum_{m=1}^\infty(-1)^m3T_mq^{3T_m}+\sum_{m=0}^{\infty}(-1)^mP_{-m}q^{P_{-m}}\right)\notag\\ 
\end{align}
Comparing the like powers of $q$ in both sides of \eqref{T32e3}, we deduce that,  for $n>3$
$$
n\left( K_{<}(n+1)-n K_{<}(n)\right)=
+\sum_{m=1}^{n}K_{<}(n-m+1)\sigma(m)-\sum_{m=1}^{n-1}K_{<}(n-m)\sigma(m)$$
\begin{equation}\label{T32e4}\sum_{m= 1}^{\lceil\frac{-3+\sqrt{9+24n}}{6}\rceil-1}(-1)^m3T_mp(n-3T_m)+\sum_{m= 0}^{\lfloor\frac{-1+\sqrt{1+24n}}{6}\rfloor}(-1)^mP_{-m}p(n-P_{-m}).
\end{equation}
Now (ii) folows immediately from \eqref{T32e4}. \\(iii) can be proved using  \eqref{k>2} and following  similar steps as in the proof of (i).
\end{proof}

\begin{corollary}
    For $n>2,$ and $m\in\mathbb{Z}$, we have
    $$
 \hspace{-5.5cm}(i)~\sum_{m=1}^{n-1}\left(K_{>}(n-m)+K_{\leq}(n-m)\right)\sigma(m)\equiv-\sigma(n)\pmod n,\notag\\
$$
$$\hspace{-8.9cm}(ii)~\sum_{m=1}^{n-1}p(n-m)\sigma(m)\equiv-\sigma(n)\pmod n,$$
\end{corollary}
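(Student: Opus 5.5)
Both parts reduce to one identity, so the plan is to prove (ii) directly and then deduce (i) from it. The starting observation is that, by \eqref{k=} and \eqref{k<},
$$K_>(n)+K_\leq(n)=K_>(n)+K_<(n)+K_=(n)=p(n)$$
for every $n\ge 1$. Hence the left-hand side of (i) is literally $\sum_{m=1}^{n-1}p(n-m)\sigma(m)$, and (i) and (ii) are the same statement. I would not try to add parts (i) and (iii) of Theorem \ref{thm1}. The right-hand sides would combine as $(-1)^m m+(-1)^{m-1}(m-1)=(-1)^m$, leaving pentagonal sums $\sum(-1)^mP_{\pm m}\,p(n-P_{\pm m})$. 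Turning these into $-\sigma(n)$ would need a further identity, which is exactly the route the direct argument avoids.

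For (ii), I would use the logarithmic derivative of Euler's product. Write $P(q)=\sum_{n\ge0}p(n)q^n=1/(q;q)_\infty$ as in \eqref{e1}. Applying $q\frac{d}{dq}$ and using \eqref{sig1} gives
$$q\frac{d}{dq}P(q)=-\frac{q\frac{d}{dq}(q;q)_\infty}{(q;q)_\infty}\cdot\frac{1}{(q;q)_\infty}=P(q)\sum_{m\ge1}\sigma(m)q^m.$$
The left side is $\sum_{n\ge1}np(n)q^n$. Expanding the right side as a Cauchy product and comparing coefficients of $q^n$ gives
$$np(n)=\sum_{m=1}^{n}\sigma(m)p(n-m)=\sum_{m=1}^{n-1}p(n-m)\sigma(m)+\sigma(n)p(0).$$

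Since $p(0)=1$ and $np(n)\equiv 0\pmod n$, this yields (ii) immediately, and substituting $K_>(n-m)+K_\leq(n-m)=p(n-m)$ for $1\le m\le n-1$ yields (i). The only point needing care is that the identity $K_>+K_\leq=p$ is applied only at the arguments $n-m\ge1$. These are exactly the arguments that occur, because the $m=n$ term $\sigma(n)p(0)$ has been moved to the right-hand side as $-\sigma(n)$. With that checked, nothing here is a real obstacle; the restriction $n>2$ is not needed by this argument.
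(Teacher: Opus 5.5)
Your proof is correct, but it runs in the opposite direction from the paper's.

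The paper proves (i) first. It adds parts (i) and (iii) of Theorem~\ref{thm1}, so the coefficients combine to $(-1)^m$ exactly as you predicted. It then evaluates the resulting pentagonal sums $\sum_{m\ge1}(-1)^m P_{\pm m}\,p(n-P_{\pm m})$ as $-\sigma(n)$ using \eqref{recrr3}. That identity comes from dividing \eqref{eptt} by $(q;q)_\infty$ and applying \eqref{sig1}. Part (ii) is then read off from (i) via $K_>+K_\leq=p$.

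You instead prove (ii) directly from $q\frac{d}{dq}\frac{1}{(q;q)_\infty}=\frac{1}{(q;q)_\infty}\sum_{m\ge1}\sigma(m)q^m$, which is the classical recurrence $np(n)=\sum_{m=1}^{n}\sigma(m)p(n-m)$. You then obtain (i) by termwise substitution.

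Both arguments rest on the same logarithmic derivative \eqref{sig1}, so the extra identity you wanted to avoid is not deep. Still, your route gains several things:
\begin{itemize}
\item It bypasses Theorem~\ref{thm1} and the pentagonal expansion entirely.
\item It yields the exact identity $\sum_{m=1}^{n-1}p(n-m)\sigma(m)=np(n)-\sigma(n)$, not just a congruence.
\item It shows the hypothesis $n>2$ is superfluous.
\item It makes plain that (i) and (ii) are literally the same statement, with no Kimberling-index content at all.
\end{itemize}
The paper's detour buys only a consistency check: the two congruences in Theorem~\ref{thm1}(i) and (iii) do sum to the expected statement about $p(n)$.
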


\begin{proof} From Theorem \ref{thm1}(i) and (iii), we note that
    $$
        \hspace{-2.3cm}\sum_{m=1}^{n-1}\left(K_{>}(n-m)+K_{\leq}(n-m)\right)\sigma(m)\equiv
        \sum_{m=1}^{\lceil\frac{1+\sqrt{1+24n}}{6}\rceil-1}(-1)^{m}P_mp(n-P_m)
  $$\begin{equation}\label{recr}\hspace{7cm}+\sum_{m=1}^{\lceil\frac{-1+\sqrt{1+24n}}{6}\rceil-1}(-1)^{m}P_{-m}p(n-P_{-m})\pmod n \end{equation}
On the other hand, using \eqref{e1} and \eqref{eptt}, we obtain
\begin{equation}\label{recra1}
\left(\sum_{n=0}^{\infty}p(n)q^n\right)
\left(
\sum_{m=-\infty}^{\infty}
(-1)^mP_mq^{P_m}
\right)=
\dfrac{1}{(q;q)_\infty}
\left(
q\dfrac{d}{dq}(q;q)_\infty
\right).
\end{equation}
Employing \eqref{sig1} in \eqref{recra1}, we obtain
\begin{equation}\label{recrr3}
\left(\sum_{n=0}^{\infty}p(n)q^n\right)
\left(
\sum_{m=-\infty}^{\infty}
(-1)^mP_mq^{P_m}
\right)
=
-\sum_{n=1}^{\infty}\sigma(n)q^n.
\end{equation}
Comparing the coefficients of like powers of $q$ on both sides of
\eqref{recrr3}, we obtain
\begin{equation}\label{recr2}
\sum_{m=1}^{\left\lfloor
\frac{1+\sqrt{1+24n}}{6}\right\rfloor}
(-1)^mP_m p(n-P_m)+\sum_{m=1}^{\left\lfloor
\frac{-1+\sqrt{1+24n}}{6}\right\rfloor}
(-1)^mP_{-m}p(n-P_{-m})
=-\sigma(n).
\end{equation}
Employing \eqref{recr2} in \eqref{recr}, we obtain
$$\sum_{m=1}^{n-1}
\left(K_{>}(n-m)+K_{\leq}(n-m)\right)\sigma(m)
\equiv-\sigma(n)\pmod n,$$
which proves $(i)$. Finally, from \eqref{k=} and \eqref{k<}, we have
$$K_{>}(n)+K_{\leq}(n)=p(n).$$
Thus, $(ii)$ follows immediately from $(i)$.
\end{proof}

\begin{theorem}
For $n>0,$  we have
\begin{align}
\hspace{-1cm}(i)&~
\sum_{m=\lfloor\frac{1-\sqrt{1+12n}}{6}\rfloor+1}
^{\lceil\frac{1+\sqrt{1+12n}}{6}\rceil-1}
\sum_{k=0}^{\lceil\frac{-1+\sqrt{1+8n}}{2}\rceil-1}
(-1)^mK_{\leq}(n-T_k-2P_m)\notag\\
&\hspace{1cm}\equiv
\sum_{\substack{r,s,t\in\mathbb Z,\ l\geq1\\
n=P_r+P_s+2P_t+P_l}}
(-1)^{r+s+t+l-1}l
+\sum_{\substack{r,s,t\in\mathbb Z,\ l\geq1\\
n=P_r+P_s+2P_t+P_{-l}}}
(-1)^{r+s+t+l-1}l
\pmod4\\
\hspace{-1cm}(ii)&~
\sum_{m=\lfloor\frac{1-\sqrt{1+12n}}{6}\rfloor+1}
^{\lceil\frac{1+\sqrt{1+12n}}{6}\rceil-1}
\sum_{k=0}^{\lceil\frac{-1+\sqrt{1+8n}}{2}\rceil-1}
(-1)^mK_{<}(n-T_k-2P_m)\notag\\
&\hspace{1cm}\equiv
\sum_{\substack{r,s,t\in\mathbb Z,\ j\geq1,\ l\geq0\\
n=P_r+P_s+2P_t+j+P_{-l}}}
(-1)^{r+s+t+l}
+\sum_{\substack{r,s,t\in\mathbb Z,\ j\geq1,\ l\geq1\\
n=P_r+P_s+2P_t+j+3T_l}}
(-1)^{r+s+t+l}
\pmod4,\\
\hspace{-1cm}(iii)&~
\sum_{m=\lfloor\frac{1-\sqrt{1+12n}}{6}\rfloor+1}
^{\lceil\frac{1+\sqrt{1+12n}}{6}\rceil-1}
\sum_{k=0}^{\lceil\frac{-1+\sqrt{1+8n}}{2}\rceil-1}
(-1)^mK_{>}(n-T_k-2P_m)\notag\\
&\hspace{1cm}\equiv
\sum_{\substack{r,s,t\in\mathbb Z,\ l\geq1\\
n=P_r+P_s+2P_t+P_l}}
(-1)^{r+s+t+l}(l-1)
+\sum_{\substack{r,s,t\in\mathbb Z,\ l\geq1\\
n=P_r+P_s+2P_t+P_{-l}}}
(-1)^{r+s+t+l}(l-1)
\pmod4.
\end{align}
\end{theorem}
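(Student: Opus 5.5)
The plan is to prove all three parts with one argument. Multiply each of Andrews' generating functions \eqref{k<=2}, \eqref{k<2}, \eqref{k>2} by a suitable theta product, and then apply the mod $4$ congruence $g_1^4\equiv g_2^2 \pmod 4$. This congruence follows from Lemma \ref{modp}: $g_2\equiv g_1^2\pmod 2$ gives $g_2=g_1^2+2h$ with $h$ a power series with integer coefficients, so squaring yields $g_2^2\equiv g_1^4\pmod 4$.

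\emph{Left-hand sides.} The double sum is the coefficient of $q^n$ in $\psi(q)\,f(-q^2)\,G(q)$, where $G$ is the relevant Kimberling generating function. Indeed, \eqref{psi} gives $\sum_{k\ge0}q^{T_k}=\psi(q)=g_2^2/g_1$, and \eqref{f} with $q\to q^2$ gives $\sum_{m\in\mathbb Z}(-1)^mq^{2P_m}=g_2$. The summation limits in the statement are exactly the ranges where $T_k\le n$ and $2P_m\le n$ (the same bookkeeping as in the first theorem of this section). So the left side of (i) is the coefficient of $q^n$ in $\frac{g_2^3}{g_1}G_1(q)$, where $G_1=P_1/g_1$ by \eqref{k<=2}. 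This equals $\frac{g_2^3}{g_1^2}P_1(q)$.

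\emph{Key congruence step.} Write $\frac{g_2^3}{g_1^2}=\frac{g_2^3 g_1^2}{g_1^4}$. Since $g_1^4\equiv g_2^2\pmod 4$ and $g_2^2$ is invertible as a power series with constant term $1$, we get $\frac{g_2^3}{g_1^2}\equiv g_1^2g_2\pmod 4$. Hence the left side of (i) is congruent to the coefficient of $q^n$ in $g_1^2g_2P_1(q)$. Expand with \eqref{f}: $g_1^2=\sum_{r,s}(-1)^{r+s}q^{P_r+P_s}$ and $g_2=\sum_t(-1)^tq^{2P_t}$. Also $P_1=\sum_{l\ge1}(-1)^{l-1}l\,(q^{P_l}+q^{P_{-l}})$, since $P_l+l=P_{-l}$. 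Reading off the coefficient of $q^n$ gives exactly the two sums in (i), with sign $(-1)^{r+s+t+l-1}l$.

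\emph{Parts (ii) and (iii).} Part (iii) is identical, using \eqref{k>2}: $G_3=P_3/g_1$ with $P_3=\sum_{l\ge1}(-1)^l(l-1)(q^{P_l}+q^{P_{-l}})$. For (ii) I use the form \eqref{T5e1}, namely $G_2=\frac{q}{1-q}\cdot\frac{P_2}{g_1}$. I expand $\frac{q}{1-q}=\sum_{j\ge1}q^j$ and $P_2=\sum_{l\ge1}(-1)^lq^{3T_l}+\sum_{l\ge0}(-1)^lq^{P_{-l}}$. The same congruence reduces the left side to the coefficient of $q^n$ in $g_1^2g_2\sum_{j\ge1}q^j\,P_2(q)$. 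This produces the two sums with the extra free index $j\ge1$ and sign $(-1)^{r+s+t+l}$.

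The only genuinely nontrivial step is the replacement $g_2^3/g_1^2\equiv g_1^2g_2\pmod 4$: one must justify dividing congruences of integral power series by a unit, which is fine since $g_1$ and $g_2$ have constant term $1$. Everything else is routine coefficient extraction. I would still double-check the limits and sign conventions, for instance that the $l=0$ term is absent in (i) and (iii) and present in (ii).
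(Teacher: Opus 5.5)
Your proposal is correct and follows the paper's proof essentially step for step. You multiply by $g_2^3/g_1=\psi(q)f(-q^2)$, use Lemma \ref{modp} in the form $g_2^2\equiv g_1^4\pmod 4$ to replace $g_2^3/g_1^2$ by $g_1^2g_2$, and read off coefficients; for (ii) you work from \eqref{T5e1} with $q/(1-q)=\sum_{j\ge1}q^j$, which the paper leaves implicit. One small note: the stated ranges are really $T_k<n$ and $2P_m<n$ (strictly), and they do not enforce $T_k+2P_m<n$, so you need the convention $K(j)=0$ for $j\le 0$, exactly as the paper does.
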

\begin{proof}
Let,
$$P_1(q)=
\sum_{n=1}^{\infty}
(-1)^{n-1}nq^{\frac{n(3n-1)}{2}}(1+q^n).$$
Then \eqref{k<=2} can be written as
\begin{equation}\label{T4e1}
\sum_{n=1}^{\infty}K_{\leq}(n)q^n
=
\dfrac{1}{g_1}P_1(q).
\end{equation}
Multiplying \eqref{T4e1} by $\left(g_2^3/g_1\right)$, we obtain
\begin{equation}\label{T4e3}
\dfrac{g_2^3}{g_1}
\sum_{n=1}^{\infty}K_{\leq}(n)q^n
=
\dfrac{g_2^3}{g_1^2}P_1(q).
\end{equation}
Employing Lemma \ref{modp} in \eqref{T4e3}, we obtain
\begin{equation}\label{T4e4}
\dfrac{g_2^3}{g_1}
\sum_{n=1}^{\infty}K_{\leq}(n)q^n
\equiv g_1^2g_2P_1(q)\pmod4.
\end{equation}
Employing \eqref{psi} in \eqref{T4e4}, we obtain
\begin{equation}\label{T4e6}
g_2\psi(q)
\sum_{n=1}^{\infty}K_{\leq}(n)q^n
\equiv g_1^2g_2P_1(q)\pmod4.
\end{equation}
Employing \eqref{f}, we obtain
\begin{align}
g_1^2g_2
&\label{t2a2}=
\left(
\sum_{r=-\infty}^{\infty}(-1)^rq^{P_r}
\right)
\left(
\sum_{s=-\infty}^{\infty}(-1)^sq^{P_s}
\right)
\left(
\sum_{t=-\infty}^{\infty}(-1)^tq^{2P_t}
\right).
\end{align}
Employing \eqref{psi} and \eqref{t2a2} in \eqref{T4e6}, we
obtain
\begin{align}
&\left(
\sum_{m=-\infty}^{\infty}(-1)^mq^{2P_m}
\right)
\left(
\sum_{k=0}^{\infty}q^{T_k}
\right)
\left(
\sum_{n=1}^{\infty}K_{\leq}(n)q^n
\right)\notag\\
&\label{T4e7}\equiv
\left(
\sum_{r=-\infty}^{\infty}(-1)^rq^{P_r}
\right)
\left(
\sum_{s=-\infty}^{\infty}(-1)^sq^{P_s}
\right)
\left(
\sum_{t=-\infty}^{\infty}(-1)^tq^{2P_t}
\right)
\left(
\sum_{l=1}^{\infty}
(-1)^{l-1}l
\left(q^{P_l}+q^{P_{-l}}\right)
\right)
\pmod4.
\end{align}
Applying the Cauchy product on the left-hand side of \eqref{T4e7},
we obtain
\begin{align}
&\sum_{n=1}^{\infty}
\left(
\sum_{m=\lfloor\frac{1-\sqrt{1+12n}}{6}\rfloor+1}
^{\lceil\frac{1+\sqrt{1+12n}}{6}\rceil-1}
\sum_{k=0}^{\lceil\frac{-1+\sqrt{1+8n}}{2}\rceil-1}
(-1)^mK_{\leq}(n-T_k-2P_m)
\right)q^n\notag\\
&\label{T4e8}\equiv
\left(
\sum_{r=-\infty}^{\infty}(-1)^rq^{P_r}
\right)
\left(
\sum_{s=-\infty}^{\infty}(-1)^sq^{P_s}
\right)
\left(
\sum_{t=-\infty}^{\infty}(-1)^tq^{2P_t}
\right)
\left(
\sum_{l=1}^{\infty}
(-1)^{l-1}l
\left(q^{P_l}+q^{P_{-l}}\right)
\right)
\pmod4.
\end{align}
Comparing the coefficients of like powers of $q$ in
\eqref{T4e8}, we obtain
\begin{align}
&\sum_{m=\lfloor\frac{1-\sqrt{1+12n}}{6}\rfloor+1}
^{\lceil\frac{1+\sqrt{1+12n}}{6}\rceil-1}
\sum_{k=0}^{\lceil\frac{-1+\sqrt{1+8n}}{2}\rceil-1}
(-1)^mK_{\leq}(n-T_k-2P_m)\notag\\
&\hspace{1cm}\equiv
\sum_{\substack{r,s,t\in\mathbb Z,\ l\geq1\\
n=P_r+P_s+2P_t+P_l}}
(-1)^{r+s+t+l-1}l
+\sum_{\substack{r,s,t\in\mathbb Z,\ l\geq1\\
n=P_r+P_s+2P_t+P_{-l}}}
(-1)^{r+s+t+l-1}l
\pmod4.
\label{T4e9}
\end{align}
This proves $(i)$.\\\\
Proof of (ii)  and (iii) are similar to the proof of (i) and follow from the  generating functions of $K_{<}(n),$ and $K_{>}(n)$, respectively.  
\end{proof}
\begin{theorem}\label{T5}
    For $n>1,$ we have 
$$\hspace{-2cm}\sum_{m=\lfloor\frac{1-\sqrt{1+24(n-1)}}{2}\rfloor+1}^{\lfloor\frac{1+\sqrt{1+24(n-1)}}{2}\rfloor}(-1)^mK_{<}(n-P_m-1)-\sum_{m=\lfloor\frac{1-\sqrt{1+24n}}{2}\rfloor+1}^{\lfloor\frac{1+\sqrt{1+24n}}{2}\rfloor}(-1)^mK_{<}\left(n-P_m\right)$$
$$\hspace{4.5cm}=\sum_{m=\lceil\frac{1-\sqrt{1+24(n-1)}}{2}\rceil}^{\lfloor\frac{1+\sqrt{1+24(n-1)}}{2}\rfloor}(-1)^{m+1}ge(n-P_m-1)+\sum_{m=0}^{n-2}p(m)F'(n-m-1).  
$$\end{theorem}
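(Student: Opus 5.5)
The plan is to read the left-hand side as the coefficient of $q^n$ in a single product and then split that product using \eqref{k<2}. Write $G_2(q)=\sum_{n\ge1}K_<(n)q^n$. By \eqref{f}, the first double sum in the statement is the coefficient of $q^n$ in $q\,g_1G_2(q)$, and the second is the coefficient of $q^n$ in $g_1G_2(q)$. So the left-hand side equals the coefficient of $q^n$ in $-(1-q)g_1G_2(q)$. Then \eqref{k<2}, in the rewritten form \eqref{T5e1}, gives
$$-(1-q)g_1G_2(q)=q\sum_{l\ge1}(-1)^{l-1}q^{3T_l}-q\sum_{l\ge0}(-1)^lq^{P_{-l}}.$$

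The first piece will produce the $ge$ term. By \eqref{gen},
$$\sum_{l\ge1}(-1)^{l-1}q^{3T_l}=g_1\sum_{n\ge0}ge(n)q^n,$$
so the first piece is $q\,g_1\sum ge(n)q^n$. Expanding $g_1$ by \eqref{f}, its coefficient of $q^n$ is a signed sum of $ge(n-1-P_m)$ over the admissible $m$. I will fix the range of $m$ and the sign convention in front of $ge$ by direct comparison with the statement.

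The second piece will produce the $F'$ term, and this is the main step. Factor $(q^{n+1};q)_\infty=g_1/(q;q)_n$ in \eqref{F} to get
$$\sum_{n\ge1}F'(n)q^n=g_1\sum_{n\ge1}\frac{(q^n;q)_nq^n}{(q;q)_n}.$$
This inner sum is exactly Fine's function \eqref{Q1} at $a=q^{-1}$, with its $m=0$ term (equal to $1$) removed. Applying the product side of \eqref{Q1} gives
$$\sum_{n\ge1}F'(n)q^n=\sum_{m\ge0}(-1)^mq^{m(3m+1)/2}-g_1.$$
Hence
$$\sum_{l\ge0}(-1)^lq^{P_{-l}}=\sum_{n\ge1}F'(n)q^n+g_1,$$
and dividing the second piece through by $g_1$ turns it into $-\frac{q}{g_1}\sum F'(n)q^n-q$. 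The coefficient of $q^n$ in $\frac{q}{g_1}\sum F'(n)q^n$ is $\sum_{m=0}^{n-2}p(m)F'(n-m-1)$, by \eqref{e1}. The stray $-q$ only affects $n=1$, which is why the theorem assumes $n>1$.

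The step I expect to be hardest is the sign bookkeeping, not any new identity. I have to reconcile the sign with which the $F'$ term appears (as derived above it comes with a minus, while the statement has a plus) and the $(-1)^{m+1}$ in front of $ge$. To do this I will carefully recheck the passage from \eqref{k<2} to \eqref{T5e1}, in particular the factor $(-1)^{n-1}$ and the $n=0$ term. If a global sign still disagrees, I will check the identity numerically at small $n$, such as $n=2,3,4$, using the OEIS values of $K_<(n)$, and fix the orientation of the statement accordingly. Finally I will match the summation limits on $m$ to the condition that the arguments $n-P_m$ and $n-1-P_m$ be nonnegative.
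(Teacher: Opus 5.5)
Your reduction is right up to one step. The left side is the coefficient of $q^n$ in $-(1-q)g_1G_2(q)$, your formula $-(1-q)g_1G_2(q)=q\sum_{l\ge1}(-1)^{l-1}q^{3T_l}-q\sum_{l\ge0}(-1)^lq^{P_{-l}}$ holds, and your Fine-function evaluation $\sum_{n\ge1}F'(n)q^n=\sum_{m\ge0}(-1)^mq^{m(3m+1)/2}-g_1=q-q^5+q^{12}-\cdots$ is correct. The step that fails is ``dividing the second piece through by $g_1$''. You have a single identity for $-(1-q)g_1G_2(q)$, and you cannot rescale one summand of its right side. Done honestly, the second piece is $-q\sum_{k\ge1}F'(k)q^k-qg_1$. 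It contributes $-F'(n-1)$ minus the coefficient of $q^{n-1}$ in $g_1$, so no convolution with $p$ appears, and the term $-qg_1$ is not confined to $n=1$. The first piece contributes $+\sum_m(-1)^m ge(n-1-P_m)$, the opposite of the stated sign.

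These discrepancies are not bookkeeping: the statement is false in either orientation. At $n=2$ the left side is $K_<(1)-\bigl(K_<(2)-K_<(1)\bigr)=1-1=0$, while the right side is $p(0)F'(1)=1$. At $n=5$ the left side is $(4-3-2)-(6-4-3)=0$, while the right side is $\bigl(-ge(4)+ge(3)\bigr)+p(3)F'(1)=3$. So the numerical check you planned would refute the statement, not fix its orientation.

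The paper's proof breaks at the corresponding place. In \eqref{t5e30} it writes $\frac{1}{(q;q)_\infty}Q(q^{-1},q)$, but by \eqref{Q1} the quantity $\frac{1}{(q;q)_\infty}\sum_{n\ge0}(-1)^nq^{n(3n+1)/2}$ is already $Q(q^{-1};q)$. This spurious factor produces the terms $q+\frac{q}{g_1}\sum_{n\ge1}F'(n)q^n$ in \eqref{T5e4}, which is false already at $q^2$. Moreover, \eqref{T5e5} lists the two $K_<$ sums in the opposite order from the theorem. Your division by $g_1$ reproduces exactly this mismatch.

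What your two evaluations actually prove, for $n\ge2$, is the identity below. Here the sums run over all $m\in\mathbb Z$, terms with nonpositive argument are $0$, and $c(k)$ denotes the coefficient of $q^k$ in $g_1$:
\[
\sum_m(-1)^mK_<(n-1-P_m)-\sum_m(-1)^mK_<(n-P_m)=\sum_m(-1)^m\,ge(n-1-P_m)-F'(n-1)-c(n-1).
\]
If instead you never multiply by $g_1$, you work with $(1-q)G_2(q)=-q\sum_{n\ge0}ge(n)q^n+q+\frac{q}{g_1}\sum_{n\ge1}F'(n)q^n$. This gives what was presumably intended:
\[
K_<(n)-K_<(n-1)=-ge(n-1)+\sum_{m=0}^{n-2}p(m)F'(n-m-1),\qquad n\ge2.
\]
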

\begin{proof}

Employing \eqref{Q1} and \eqref{gen} in \eqref{T5e1}, we obtain
\begin{align}
\sum_{n=1}^\infty K_{<}(n) q^n &=\frac{q}{(1-q)} \left(-\sum_{n=0}^\infty ge(n)q^n+\frac{1}{(q; q)_\infty}Q(q^{-1},q)\right)\notag\\
&\label{t5e30}=\frac{q}{(1-q)} \left(-\sum_{n=0}^\infty ge(n)q^n+\frac{1}{(q; q)_\infty}+\frac{1}{(q; q)_\infty}\sum_{n=1}^{\infty}\dfrac{q^n(q^n;q)_n}{(q;q)_n}\right).
\end{align}
From  \cite{1,7}, we note that
\begin{equation}\label{bf}
    (\alpha;q)_\infty=(\alpha;q)_n(\alpha q^n;q)_\infty
\end{equation}
Using \eqref{bf} in  \eqref{t5e30}, we obtain 
\begin{align}
\sum_{n=1}^\infty K_{<}(n) q^n
&=\label{T5e3}\frac{q}{(1-q)} \left(-\sum_{n=0}^\infty ge(n)q^n+\frac{1}{(q; q)_\infty}+\frac{1}{(q; q)^2_\infty}\sum_{n=1}^{\infty}q^n(q^{n+1};q)_\infty(q^n;q)_n\right).
\end{align}
Multiplying both sides of \eqref{T5e3} by $(q; q)_\infty$ and then emplyoing  \eqref{e1},  \eqref{f} and \eqref{F}, we obtain 
$$\hspace{-7cm}
 \left(1-q\right)\left(\sum_{m=-\infty}^{\infty} (-1)^m q^{m(3m-1)/2}\right)\left(\sum_{n=1}^\infty K_{<}(n) q^n\right)$$
 \begin{equation}\label{T5e4}= \left(\sum_{m=-\infty}^{\infty} (-1)^m q^{m(3m-1)/2}\right)\left(-\sum_{n=0}^\infty ge(n)q^{n+1}\right)+q+\left(\sum_{m=0}^{\infty}p(m)q^m\right)\left(\sum_{n=1}^{\infty}F'(n)q^{n+1}\right).   \end{equation}
Comparing the like powers of $q$ on both sides of \eqref{T5e4} for $n\geq2$, we obtain
\begin{align}
&\hspace{-1cm}\sum_{m=\lfloor\frac{1-\sqrt{1+24n}}{6}\rfloor+1}^{\lceil\frac{1+\sqrt{1+24n}}{6}\rceil-1}(-1)^mK_{<}\left(n-P_m\right)-\sum_{m=\lfloor\frac{1-\sqrt{1+24(n-1)}}{6}\rfloor+1}^{\lceil\frac{1+\sqrt{1+24(n-1)}}{6}\rceil-1}(-1)^mK_{<}(n-P_m-1)\notag\\
&\label{T5e5}=\sum_{m=\lceil\frac{1-\sqrt{1+24(n-1)}}{6}\rceil}^{\lfloor\frac{1+\sqrt{1+24(n-1)}}{6}\rfloor}(-1)^{m+1}ge(n-P_m-1)+\sum_{m=0}^{n-2}p(m)F'(n-m-1).  
\end{align}
Hence, the proof is complete.
\end{proof}

\begin{theorem}\label{t4} For integers $n>1$ and $\alpha>0$,  we have
    $$\hspace{-10.5cm}(i)~
         K_{\leq}(3^\alpha n)\equiv K_{\leq}(n)\pmod 2.
    $$ 
    $$\hspace{-7.9cm}(ii)~
        3\sum_{n=1}^{\infty}K_{\leq}(3n+1)q^n\equiv \phi(q)f(q;q^5)\pmod 2.
    $$
    $$\hspace{-11.0cm}(iii)~
         K_{\leq}(3n+2)\equiv 0\pmod2.
    $$
    $$\hspace{-11.0cm}(iv)~
         K_{\leq}(5n+2)\equiv 0\pmod2.
    $$
    $$\hspace{-9.6cm}(v)~
         K_{>}(3n+2)\equiv p(3n+2)\pmod2.
    $$
    $$\hspace{-9.6cm}(vi)~
         K_{>}(5n+2)\equiv p(5n+2)\pmod2.
    $$
\end{theorem}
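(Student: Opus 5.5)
The plan is to reduce everything to the arithmetic function $E(n):=d_{3,1}(n)-d_{3,2}(n)$ by means of part (ii) of the theorem expressing $K_{\leq}(n)$ through $d_{3,1}(n)-d_{3,2}(n)$, which gives $K_{\leq}(n)\equiv E(n)\pmod 2$ for $n>1$. Parts (v) and (vi) will then follow from (iii) and (iv), because \eqref{k=} and \eqref{k<} give $K_{>}(n)+K_{\leq}(n)=p(n)$. So the work lies in parts (i)–(iv), and all of them rest on the multiplicativity of $E$.

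The first step is to record that $E(n)=\sum_{d\mid n}\chi(d)$, where $\chi$ is the nonprincipal character modulo $3$. Hence $E$ is multiplicative, with
\begin{itemize}
\item $E(3^a)=1$;
\item $E(p^k)=k+1$ for $p\equiv1\pmod 3$;
\item $E(p^k)=1$ or $0$ for $p\equiv2\pmod3$, according as $k$ is even or odd.
\end{itemize}
Part (i) is then immediate, since $E(3^\alpha n)=E(3^\alpha)E(n')E(\ldots)=E(n)$ after factoring out the power of $3$; in fact it holds with equality, not only modulo $2$. For part (iii), any $N\equiv 2\pmod 3$ must contain some prime $p\equiv2\pmod3$ to an odd power, because otherwise $N\equiv \pm1$ would be forced to equal $1$ modulo $3$ (the factor $3^a$ is excluded since $3\nmid N$). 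Therefore $E(3n+2)=0$, and the theorem on $K_{\leq}$ gives $K_{\leq}(3n+2)\equiv0\pmod 2$.

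For part (ii), I would use the classical $3$-dissection of the cubic theta function $a(q)=1+6\sum E(n)q^n$, namely
$$\sum_{n\ge0}E(3n+1)q^n=\frac{g_3^3}{g_1}.$$
I would then reduce both sides modulo $2$ using Lemma \ref{modp}, with $g_2\equiv g_1^2$, $g_4\equiv g_1^4$, $g_6\equiv g_3^2$ and $g_{12}\equiv g_3^4$. On the right-hand side, I would use the product forms $\phi(q)=g_2^5/(g_1^2g_4^2)$ and $f(q,q^5)=g_2g_3g_{12}/(g_1g_4g_6)$, the latter obtained via the Jacobi triple product. The main obstacle is making this final matching exact, because the powers of $g_1$ must be tracked carefully. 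A preliminary count gives $g_3^3/g_1^3$ on the right against $g_3^3/g_1$ on the left. I would therefore first recheck the product for $f(q,q^5)$ and the normalisation involving the factor $3$, and, if needed, check the exact form of the claimed identity against the first few coefficients.

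For part (iv), I would test the statement against the values of $E$ before attempting any general argument. Taking $n=2$ gives $5n+2=12$, and $E(12)=E(4)=1$ (the divisors of $12$ congruent to $1$ modulo $3$ are $1,4$, and the only one congruent to $2$ is $2$), so $K_{\leq}(12)$ would be odd. Thus, as stated, (iv), and with it (vi), appears to conflict with the theorem giving $K_{\leq}(n)\equiv d_{3,1}(n)-d_{3,2}(n)\pmod 2$. Before going further I would verify $K_{\leq}(12)$ directly from \eqref{k<=1}. If the value is confirmed, the congruence needs a corrected progression or hypothesis. A natural candidate is a modulus and residue, such as $N\equiv 2\pmod 3$, for which a prime $p\equiv 2\pmod 3$ must divide $N$ to an odd power.
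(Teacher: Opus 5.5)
Your route is different from the paper's. The paper never uses multiplicativity. It substitutes $\phi(q)\phi(q^3)+4q\psi(q^2)\psi(q^6)=1+6\sum_{n\ge1}(d_{3,1}(n)-d_{3,2}(n))q^n$ into \eqref{t1e11} to get $6\sum K_{\leq}(n)q^n\equiv\phi(q)\phi(q^3)-1\pmod{4}$. It then reads off (i)--(iii) from $\phi(q)=\phi(q^9)+2qf(q^3,q^{15})$, and (iv) from the $5$-dissection of $\phi(q)$. Your arithmetic route uses $K_{\leq}(n)\equiv E(n)\pmod 2$ with $E(n)=\sum_{d\mid n}\chi(d)$ multiplicative. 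It proves (i), (iii) and (v) completely, and gives more: $E(3^\alpha n)=E(n)$ and $E(3n+2)=0$ exactly. Your local factors also show that $K_{\leq}(N)$ is odd precisely when $N=3^am^2$, which is exactly what exposes the problem with (iv) below.

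\textbf{Part (ii).} The gap here is a computational slip, not a problem with the identity. By the Jacobi triple product,
\[
f(q,q^5)=(-q;q^6)_\infty(-q^5;q^6)_\infty(q^6;q^6)_\infty=\frac{(-q;q^2)_\infty}{(-q^3;q^6)_\infty}\,g_6=\frac{g_2^2g_3g_{12}}{g_1g_4g_6},
\]
with $g_2^2$, not $g_2$. Modulo $2$ this is $g_3^3/g_1$, and $\phi(q)\equiv1$, so both sides agree with your $\sum_{n\ge0}E(3n+1)q^n=g_3^3/g_1$. You do not even need the cubic dissection. $E(3n+1)$ is odd iff $3n+1=m^2$ with $3\nmid m$. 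Writing $\pm m=3j-1$ for a unique $j\in\mathbb{Z}$ gives $n=3j^2-2j$, so $\sum_{n\ge0}K_{\leq}(3n+1)q^n\equiv\sum_{j\in\mathbb{Z}}q^{3j^2-2j}=f(q,q^5)\pmod 2$. The sum must start at $n=0$, because $K_{\leq}(1)=1$ is needed to match the constant term of $\phi(q)f(q,q^5)$. With the printed lower limit $n=1$, the constant terms disagree.

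\textbf{Parts (iv) and (vi).} You are right here, and the defect is in the statement rather than in your proposal. From \eqref{k<=2}, $K_{\leq}(12)=p(11)+p(10)-2p(7)-2p(5)+3p(0)=57$, which is odd. From \eqref{k>2}, $K_{>}(12)=p(7)+p(5)-2p(0)=20$, whereas $p(12)=77$. So both parts fail at $n=2$ and cannot be proved.

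The paper's argument breaks when it extracts the $q^{5n+2}$ terms from $\phi(q^3)(\phi(q^{25})+2qf(q^{15},q^{35})+2q^4f(q^5,q^{45}))$ as though $\phi(q^3)$ were a series in $q^5$. In fact $\phi(q^3)$ contains $2q^{3k^2}$ with $3k^2\equiv2\pmod5$ whenever $k\equiv\pm2\pmod5$, for example $2q^{12}$. The coefficient of $q^{12}$ in $\phi(q)\phi(q^3)$ is $6$, from $12=0^2+3\cdot2^2=3^2+3\cdot1^2$, which forces $K_{\leq}(12)$ to be odd.

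Your criterion shows more: no congruence $K_{\leq}(5n+r)\equiv0\pmod2$ can hold for any $r$, since the numbers $3^am^2$ meet every residue class modulo $5$. A valid replacement of the kind you suggest is $K_{\leq}(4n+2)\equiv0\pmod2$, and hence $K_{>}(4n+2)\equiv p(4n+2)\pmod2$. This holds because $2$ divides $4n+2$ exactly once.
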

\begin{proof}
From \cite[Theorem 3.7.9]{bc}, we have
\begin{equation}\label{t3.7.9}
\phi(q)\phi(q^3)+4q\psi(q^2)\psi(q^6)=1+6\sum_{k=0}^{\infty}\left(\dfrac{q^{3k+1}}{1-q^{3k+1}}-\dfrac{q^{3k+2}}{1-q^{3k+2}}\right).
\end{equation}
Employing \eqref{t3.7.9} in \eqref{t1e11}, we obtain
\begin{equation}\label{new1}
    6\sum_{n=1}^{\infty}K_{\leq}(n)q^n\equiv\phi(q)\phi(q^3)-1\pmod4.
\end{equation}
Employing \eqref{newpsi1} in \eqref{new1}, we obtain
\begin{equation}\label{new2}
    6\sum_{n=1}^{\infty}K_{\leq}(n)q^n\equiv\phi(q^3)\left(\phi(q^9)+2qf(q^3,q^{15})\right)-1\pmod4.
\end{equation}
Extracting the terms involving $q^{3n}$ from both sides of \eqref{new2} and replacing $q^3$ by $q$, we obtain 
\begin{equation}\label{new4}
    6\sum_{n=1}^{\infty}K_{\leq}(3n)q^{n}\equiv\phi(q)\phi(q^3)\pmod4.
\end{equation}
Employing \eqref{new1} and \eqref{new4}, we obtain
\begin{equation}\label{new5}
6\sum_{n=1}^{\infty}K_{\leq}(3n)q^{n}\equiv6\sum_{n=1}^{\infty}K_{\leq}(n)q^{n}+1\pmod4.
\end{equation}
Extracting the coefficients of  $q^n$  from both sides of \eqref{new5}, we obtain
\begin{equation}\label{new6}
    6K_{\leq}(n)\equiv
    6K_{\leq}(3n)\pmod4.
\end{equation}
which implies
\begin{equation}\label{ty}
   K_{\leq}(n)\equiv
    K_{\leq}(3n)\pmod2.
\end{equation}
Iterating \eqref{ty} by replacing $n$ by $3n$, we arrive at (i).

Extracting the terms involving $q^{3n+1}$ from both sides of \eqref{new2},  dividing by $q$ and replacing $q^3$ by $q$, we arrive at (ii). 

Extracting the terms involving $q^{3n+2}$ from both sides of \eqref{new2},  dividing  by $q^2$ and replacing $q^3$ by $q$, we obtain 
\begin{equation}\label{new3}
     6\sum_{n=1}^{\infty}K_{\leq}(3n+2)q^n\equiv0\pmod4.
\end{equation}
Now (iii) easily follows from  \eqref{new3}. 

Employing \eqref{newpsi2} in \eqref{new1}, we obtain
\begin{equation}\label{new9}
     6\sum_{n=1}^{\infty}K_{\leq}(n)q^{n}\equiv\phi(q^3)\left(\phi(q^{25})+2qf(q^{15},q^{35})+2q^4f(q^5,q^{45})\right)\pmod4
\end{equation}Extracting the terms involving $q^{5n+2}$ from \eqref{new9}, dividing by $q^2$ and replacing $q^5$ by $q$, we arrive at (iv). 
 
 Employing \eqref{k=} and \eqref{k<} i result (iii) and (iv), we easily arrive ta (v) and (vi), respectively.
\end{proof}
\begin{theorem}For $m,$ $n,$ $k>0,$   $P_{\pm m}\equiv0\pmod5,$ and $T_{k}\equiv0\pmod5,$ we have
    \begin{align}
    \hspace{-4cm}(i)~& 
          K_{\leq}(5n+4+P_{\pm m})\equiv0\pmod{5m}. \notag\\
          \hspace{-4cm} (ii)~&
          K_{>}(5n+4+P_{\pm m+1})\equiv0\pmod{5m}. \notag\\
          \hspace{-4cm}  (iii)~&
          K_{<}(5n+3T_{k})\equiv K_{<}(5n+4+3T_{k})\pmod5.  \notag\\
          \hspace{-4cm}  (iv)~&
          K_{<}(5n+P_{-m})\equiv K_{<}(5n+4+P_{-m})\pmod5.  \notag
        \end{align}
\end{theorem}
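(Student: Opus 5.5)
We work modulo 5 with the generating functions \eqref{k<=2}, \eqref{k<2} and \eqref{k>2}, and with the classical fact that $1/(q;q)_\infty$ has its $q^{5n+4}$ coefficients divisible by 5. This fact follows from Lemma \ref{RR5}, or from $g_1^4\cdot g_1/g_1^5$ together with Lemma \ref{modp}, giving $1/g_1\equiv g_1^4/g_5 \pmod 5$ and Jacobi's identity \eqref{jtp}. Write the relevant series as $\frac{1}{(q;q)_\infty}\sum_j c_j q^{e_j}$. Then the coefficient of $q^N$ is $\sum_j c_j\,p(N-e_j)$.

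For (i), take \eqref{k<=2}:
$$K_{\leq}(N)=\sum_{l\ge1}(-1)^{l-1}l\,\bigl(p(N-P_l)+p(N-P_{-l})\bigr).$$
Put $N=5n+4+P_{\pm m}$ with $P_{\pm m}\equiv 0\pmod 5$.

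1. The terms with $e_j\equiv 0\pmod 5$ contribute $p(\equiv 4 \bmod 5)$, hence vanish mod 5 by Ramanujan's congruence.
2. The remaining terms must be handled separately. The intended mechanism appears to be that isolating the single exponent $P_{\pm m}$ yields the factor $m\,p(5n+4)$, which is divisible by $5m$.

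So the plan is to show that, modulo $5m$, only the $l=m$ term survives. I expect this to be the main obstacle. As literally stated, with the other terms present, the claim looks doubtful. I would therefore first test small cases numerically, for example $m=2$, $P_{-2}=5$, using the OEIS values, before trusting this route.

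Part (ii) should go identically from \eqref{k>2}, where the weight is $(l-1)$. At index $m+1$ this weight is $m$, which explains the shift to $P_{\pm m+1}$.

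For (iii) and (iv), use \eqref{T12e1}:
$$K_<(N+1)-K_<(N)=\sum_{l\ge1}(-1)^l p(N-3T_l)+\sum_{l\ge0}(-1)^l p(N-P_{-l}).$$
Summing this telescoping relation over $N=5n+a,\dots,5n+a+3$ expresses $K_<(5n+a+4)-K_<(5n+a)$ as a sum of $p$-values over four consecutive arguments. Choosing $a=3T_k$ (resp.\ $P_{-k}$) with $a\equiv 0\pmod 5$, we then invoke $p(5n+4)\equiv 0\pmod 5$ and, more substantially, the dissection of Lemma \ref{RR5} to kill the remaining combination modulo 5. Again, the hardest step is controlling the non-isolated exponents. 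If a full cancellation does not emerge, I would check small values to determine which additional hypotheses the statement implicitly requires.
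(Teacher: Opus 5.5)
The step you single out as the main obstacle --- showing that, modulo $5m$, only the $l=m$ summand survives --- cannot be carried out, because the statement is false. The numerical test you propose settles it.

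For (i), take $m=2$ and $n=1$. Then $P_2=5\equiv0\pmod 5$ (in the paper's convention $P_{-2}=7$, not $5$). By \eqref{k<=2},
\[
K_{\leq}(14)=p(13)+p(12)-2\bigl(p(9)+p(7)\bigr)+3p(2)=178-90+6=94.
\]
This agrees with a direct count ($p(14)=135$, $K_{>}(14)=41$). But $94$ is not divisible by $10$, nor even by $5$.

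For (iv), take $m=3$ ($P_{-3}=15$) and $n=1$. Summing \eqref{T12e1} over $N=20,\dots,23$ gives $K_{<}(24)-K_{<}(20)=93+115+142+173=523\equiv3\pmod5$.

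For (iii), take $k=4$ ($3T_4=30$) and $n=1$. Then $K_{<}(39)-K_{<}(35)=1658+1975+2345+2779=8757\equiv2\pmod5$.

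Part (ii) fails under either reading of $P_{\pm m+1}$. With your shift ($l=m+1=2$, $P_2=5$, modulus $5$) one gets $K_{>}(14)=41$. With the literal hypothesis ($m=2$, $P_{m+1}=P_3=12$, modulus $10$) one gets $K_{>}(21)=284$.

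You have in fact reconstructed the paper's own argument, including its flaw. After inserting Lemma \ref{RR5} into \eqref{k<=2}, the paper ``extracts the terms involving $q^{5n+4+P_{\pm m}}$'', keeps only the $l=m$ summand, and arrives at \eqref{m5E1}. But every $l$ contributes to the coefficient of $q^N$. Summands with $P_{\pm l}\equiv0\pmod5$ meet the $5q^4$ component and give multiples of $5l$, not of $5m$. Summands with $P_{\pm l}\not\equiv0\pmod5$ meet the other components of the $5$-dissection, which are not divisible by $5$. Indeed \eqref{m5E1} cannot hold: for even $m$ its right-hand side has only negative coefficients, whereas the left-hand side counts partitions.

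The same defect affects (ii)--(iv). In your telescoping version, consider any exponent $e$ of $\sum_{l\ge1}(-1)^lq^{3T_l}+\sum_{l\ge0}(-1)^lq^{P_{-l}}$. At most one of the four values $p(A+j-e)$, $0\le j\le 3$, is forced to vanish modulo $5$, and nothing makes the rest cancel.

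So the correct conclusion of your plan is a disproof. What the mechanism legitimately gives is only this: in $K_{\leq}(5n+4)=\sum_{l\ge1}(-1)^{l-1}l\bigl(p(5n+4-P_l)+p(5n+4-P_{-l})\bigr)$, the summands with $P_{\pm l}\equiv0\pmod5$ are divisible by $5l$.
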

\begin{proof} Employing Lemma \ref{RR5} in \eqref{k<=2}, we obtain
    \begin{align} 
   \hspace{-.2cm} \sum_{n=1}^\infty K_{\leq}(n) q^n=&\frac{(q^{25}; q^{25})_\infty^5}{(q^5; q^5)_\infty^6} (F^{-4}(q^5) + qF^{-3}(q^5)\notag\\
    & \hspace{-.5cm}+ 2q^2 F^{-2}(q^5) + 3q^3 F^{-1}(q^5) + 5q^4 - 3q^5 F(q^5)\notag\\ 
  & \hspace{-.5cm}\label{RR5e1}+ 2q^6 F^2(q^5) - q^7 F^3(q^5) + q^8 F^4(q^5))\left(\sum_{n=1}^\infty (-1)^{n-1} n q^{n(3n-1)/2} (1 + q^n)\right).
    \end{align}
 Extracting the terms involving $q^{5n+4+P_{\pm m}}$ from \eqref{RR5e1} such that $P_{\pm m}\equiv0\pmod5$, dividing both sides by $q^{4+P_{\pm m}},$ and then replacing $q^5$ by $q,$ we obtain
 \begin{equation}\label{m5E1}
         \sum_{n=1}^\infty K_{\leq}(5n+4+P_{\pm m})q^n=5\frac{(q^{5}; q^{5})_\infty^5}{(q; q)_\infty^6}\left((-1)^{m-1} m \right). 
        \end{equation}
 Now (i) follows easily from \eqref{m5E1}. (ii) follows from \eqref{k>2} and Lemma \ref{RR5}. Similarly, (iii) and (iv) follow from \eqref{k<2} and Lemma \ref{RR5}.
\end{proof}
\begin{theorem}For $m,$ $n,$ $k>0,$  $P_{\pm m}\equiv0\pmod7,$ and $T_{k}\equiv0\pmod7$, we have
  \begin{align} \hspace{-3.4cm}(i)&~ 
          K_{\leq}(7n+5+P_{\pm m})\equiv0\pmod7. 
      \notag\\
      \hspace{-3.4cm} (ii)&~
          K_{<}(7n+6+3T_{k})\equiv K_{<}(7n+5+3T_{k})\pmod7. 
        \notag\\
      \hspace{-3.4cm} (iii)&~
          K_{<}(7n+6+P_{-m})\equiv K_{<}(7n+5+P_{-m})\pmod7. \notag\\
        \hspace{-3.4cm}(iv)&~
          K_{>}(7n+5+P_{\pm m})\equiv0\pmod7. 
       \notag
       \end{align}
\end{theorem}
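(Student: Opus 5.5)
Following the mod 5 argument, I would rewrite each generating function \eqref{k<=2}, \eqref{k<2}, \eqref{k>2} so that the factor $1/(q;q)_\infty$ becomes a power series in $q^7$ times a known series, modulo 7. The relevant tool is Lemma~\ref{m7}. By Lemma~\ref{modp}, $g_1^7\equiv g_7\pmod 7$, so
$$\frac{1}{g_1}=\frac{g_1^6}{g_1^7}\equiv \frac{g_1^6}{g_7}\equiv \frac{\left(J_0(q^7)+qJ_1(q^7)+q^3J_3(q^7)\right)^2}{g_7}\pmod 7 .$$
Expanding the square, the exponents of $q$ modulo 7 that occur in $1/g_1$ are only $0,1,2,3,4,6$. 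The residue $5$ is absent, since none of $0+0,0+1,1+1,0+3,1+3,3+3$ is $\equiv5\pmod7$. Hence $\frac{1}{g_1}\equiv \sum_{r\in\{0,1,2,3,4,6\}}q^rA_r(q^7)\pmod7$, and in this dissection the coefficient of $q^{7n+5}$ is $\equiv0$. This is just the classical $p(7n+5)\equiv0\pmod 7$.

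For (i), I would multiply by $P_1(q)=\sum_{l\ge1}(-1)^{l-1}l\,q^{P_l}(1+q^l)$. The hypothesis $P_{\pm m}\equiv0\pmod 7$ is meant to isolate the terms $q^{P_{\pm m}}$ that shift residue $5$ to residue $5$. Extracting terms $q^{7n+5+P_{\pm m}}$ exactly as in the proof of the mod 5 theorem, the contribution from the term $q^{P_{\pm m}}$ of $P_1$ pairs with the vanishing residue-$5$ part of $1/g_1$. This gives $K_{\le}(7n+5+P_{\pm m})\equiv0\pmod 7$. Part (iv) is the same computation using \eqref{k>2} with weights $(l-1)$, or alternatively it follows from (i) via $K_>(n)=p(n)-K_{\le}(n)$ and $p(7N+5)\equiv0$, where $7N+5=7n+5+P_{\pm m}$.

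For (ii) and (iii), I would use \eqref{T12e1}:
$$\sum_n\left(K_<(n+1)-K_<(n)\right)q^n=\frac{1}{g_1}\Bigl(\sum_{l\ge1}(-1)^lq^{3T_l}+\sum_{l\ge0}(-1)^lq^{P_{-l}}\Bigr).$$
Taking the term $q^{3T_k}$ with $3T_k\equiv0\pmod7$ (respectively $q^{P_{-m}}$), the coefficient of $q^{7n+5+3T_k}$ reduces to the residue-$5$ part of $1/g_1$, which is $\equiv0$. This yields $K_<(7n+6+3T_k)-K_<(7n+5+3T_k)\equiv0\pmod 7$, and similarly for $P_{-m}$.

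The main obstacle is the extraction step. The other terms of $P_1(q)$, or of the theta sums, whose exponents are not $\equiv 0\pmod 7$ can also land on residue $5$ when combined with residues $0,1,2,3,4,6$ of $1/g_1$. A rigorous proof would need to control these cross terms, and the stated extraction, like the paper's mod 5 argument, implicitly isolates only the single designated term. I would first verify that the remaining residue classes do not spoil the claim, for example by checking small $n$ numerically against OEIS A237803 and related sequences. If they do spoil it, I would restrict the statement to the contribution of the designated term, mirroring \eqref{m5E1}.
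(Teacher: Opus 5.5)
Your argument is, step for step, the paper's proof. You replace $1/g_1$ by $g_1^6/g_7$ modulo $7$, use Lemma~\ref{m7} to see that $1/g_1$ has no part in the class $5 \pmod{7}$, and then ``extract the terms $q^{7n+5+P_{\pm m}}$''. The doubt in your last paragraph is exactly where this breaks. It cannot be repaired, because the statement itself is false.

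Since $P_{\pm m}\equiv 0\pmod{7}$, the number $7n+5+P_{\pm m}$ is just some $7N+5$. Its coefficient in $\frac{1}{g_1}P_1(q)$ is the sum over \emph{all} terms of $P_1(q)$, so you cannot keep only the term $q^{P_{\pm m}}$. Generalized pentagonal numbers lie in the classes $0,1,2,5\pmod{7}$. Besides the vanishing pairing (class $5$ of $1/g_1$ with class $0$ of $P_1$), the pairings of classes $(4,1)$, $(3,2)$, $(0,5)$ also land on $q^{7N+5}$, and nothing forces them to cancel.

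They do not cancel. With $n=1$, $m=2$, $P_{-2}=7$, formulas \eqref{k<=2} and \eqref{k>2} give
\begin{align*}
K_{\le}(19)&=p(18)+p(17)-2p(14)-2p(12)+3p(7)+3p(4)=318\equiv 3\pmod{7},\\
K_{>}(19)&=p(14)+p(12)-2p(7)-2p(4)=172\equiv 4\pmod{7},
\end{align*}
so (i) and (iv) fail. From \eqref{T12e1}, $K_<(20)-K_<(19)=p(19)-p(17)-p(16)+p(12)+p(10)-p(4)-p(1)=75\equiv 5\pmod{7}$, so (iii) fails at the same $n,m$. The same expansion with $n=2$, $k=6$ ($3T_6=63$) gives $K_<(83)-K_<(82)=1479805\equiv 5\pmod{7}$, so (ii) fails too. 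There the exponents $3T_l$ occupy the classes $0,2,3,4$ and the $P_{-l}$ the classes $0,1,2,5$, so several cross pairings again hit class $5$.

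So the sentence ``This gives $K_{\le}(7n+5+P_{\pm m})\equiv 0\pmod{7}$'' in your treatment of (i) is not a gap that more care would close; it is a wrong conclusion. The paper's proof makes the identical one-term extraction, as does \eqref{m5E1} in the mod $5$ theorem (e.g.\ $K_{\le}(14)=94$). Your instinct to test small cases first was the right one.

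Two things from your proposal do survive. First, the residue computation showing that $1/g_1$ has no class-$5$ part modulo $7$ (i.e.\ $p(7N+5)\equiv 0$). Second, your observation that (i) and (iv) are equivalent, since $K_{\le}(7N+5)+K_{>}(7N+5)=p(7N+5)\equiv 0\pmod{7}$ (indeed $318+172=490$). Your fallback of ``restricting to the contribution of the designated term'' would not yield any congruence for $K_{\le}$, $K_{<}$ or $K_{>}$ themselves.
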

\begin{proof}Employing Lemma \ref{modp} in \eqref{k<=2}, we obtain 
    \begin{equation}\label{T7E1}
    \sum_{n=1}^\infty K_{\leq}(n) q^n\equiv\frac{(q;q)_\infty^6}{(q^7; q^7)_\infty} \sum_{n=1}^\infty (-1)^{n-1} n q^{n(3n-1)/2} (1 + q^n)\pmod7.
\end{equation}
    Employing Lemma \ref{m7} in \eqref{T7E1}, extracting the terms involving $q^{7n+5+P_{\pm m}}$ with $P_{\pm m}\equiv0\pmod7$, dividing both sides by $q^{5+P_{\pm m}}$ and then replacing $q^7$ by $q$, we obtain
    \begin{equation}\label{T7e1}
         \sum_{n=1}^\infty K_{\leq}(7n+5+P_{\pm m})q^n\equiv0\pmod7. 
        \end{equation} Now (i) follows immediately from \eqref{T7e1}. 
    Proofs of $(ii),$ $(iii),$ and $(iv)$ are identitcal to the proof of (i), so omitted.
\end{proof}

\begin{theorem}For $m,$ $n,$ $k>0,$  $P_{\pm m}\equiv0\pmod7,$ and $T_{k}\equiv0\pmod{11}$, we have
    \begin{align}
       \hspace{-3.3cm} (i)~&
          K_{\leq}(11n+6+P_{\pm m})\equiv0\pmod{11}\notag\\
       \hspace{-3.3cm} (ii)~&
          K_{<}(11n+7+3T_{k})\equiv K_{<}(11n+6+3T_{k})\pmod{11}.\notag\\
         \hspace{-3.3cm} (iii)~&
          K_{<}(11n+7+P_{-m})\equiv K_{<}(11n+6+P_{-m})\pmod{11}.
        \notag\\
        \hspace{-3.3cm}(iv)~&
          K_{>}(11n+6+P_{\pm m})\equiv0\pmod{11}.\notag
       \end{align}
\end{theorem}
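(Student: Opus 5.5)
The plan is to copy the mod~7 argument, replacing Lemma~\ref{m7} with Lemma~\ref{m11}. Lemma~\ref{modp} gives $1/g_1\equiv g_1^{10}/g_{11}\pmod{11}$. Substituting this into \eqref{k<=2} should give
\begin{equation*}
\sum_{n=1}^\infty K_{\leq}(n)q^n\equiv \frac{(g_1^3)^3\,g_1}{g_{11}}\sum_{m=1}^\infty(-1)^{m-1}m\,q^{P_m}(1+q^m)\pmod{11}.
\end{equation*}
The extra factor $g_1$ is the awkward part. The cleaner route is to write $g_1^{10}=(g_1^3)^2\cdot g_1^4$, or better $g_1^{10}=(g_1^{5})^2$. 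However, the paper's Lemma~\ref{m11} only controls $g_1^3$. I would therefore use $g_1^{10}\equiv g_1^{-1}g_{11}\pmod{11}$ in the form $g_1^{10}=g_1^{9}\cdot g_1$. Then I would expand $g_1^9=(g_1^3)^3$ using \eqref{l11E1}, and expand $g_1$ using Euler's pentagonal series \eqref{f}.

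The key step is a residue count. The exponents in $g_1^3$ modulo $11$ lie in $S=\{0,1,3,6,10\}$, since these are the residues of $T_k$. Pentagonal numbers modulo $11$ take the values $\{0,1,2,4,5,7,10\}$ (note $P_m\equiv 0$ for $m\equiv 0,4$). I would list all sums $s_1+s_2+s_3+p \pmod{11}$ with $s_i\in S$ and $p$ a pentagonal residue, and check which residue class is never reached. Alternatively, I would use the standard fact behind Ramanujan's $p(11n+6)\equiv0$: the coefficient of $q^{11n+6}$ in $g_1^{10}$ vanishes modulo $11$. That fact is exactly Ramanujan's congruence $p(11n+6)\equiv 0\pmod{11}$, since $g_1^{10}\equiv g_{11}/g_1$. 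So I would invoke it directly: $\sum p(n)q^n\equiv g_1^{10}/g_{11}$, and the $q^{11n+6}$ component of $1/g_1$ is $\equiv 0\pmod{11}$.

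With that, extraction works as in the mod~7 theorem. Take the terms $q^{11n+6+P_{\pm m}}$ on the right side. The theta factor contributes $q^{P_{\pm m}}$ with $P_{\pm m}\equiv0\pmod{11}$, so what remains to extract is the $q^{11n+6}$ part of $1/g_1$, which vanishes modulo $11$. This gives (i). (The hypothesis should read $P_{\pm m}\equiv 0\pmod{11}$; I would note the typo.) For (iv) I would do the same with \eqref{k>2}, whose theta factor has the same exponents $P_{\pm m}$ with coefficients $(m-1)$. For (ii) and (iii) I would multiply \eqref{k<2} by $(1-q)$ and use \eqref{T12e1}: the left side is $\sum (K_<(n+1)-K_<(n))q^n$. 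Extracting $q^{11n+6+3T_k}$ with $3T_k\equiv0$, respectively $q^{11n+6+P_{-m}}$ with $P_{-m}\equiv 0\pmod{11}$, shows that $K_<(11n+7+\cdot)-K_<(11n+6+\cdot)\equiv0\pmod{11}$.

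The main obstacle is rigor in the extraction step. Only the single theta term with exponent $P_{\pm m}$ was singled out, but other theta terms with different residues also feed into the class $6+P_{\pm m}\pmod{11}$. As in the paper's earlier arguments, the extraction has to be read as isolating the contribution of that theta term, or the statement should be taken under that interpretation. I would first try to justify this by treating each theta term separately against the vanishing $11n+6$ component of $1/g_1$. I would flag that a fully honest statement sums over all theta terms, with the congruence holding for the combined convolution.
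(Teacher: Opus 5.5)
Your argument has a genuine gap, the one you flag yourself at the end, and it cannot be closed because the statement is false. The coefficient of $q^N$ in $\frac{1}{g_1}\sum_{j\ge1}(-1)^{j-1}j\,(q^{P_j}+q^{P_{-j}})$ is
\[
K_{\leq}(N)=\sum_{j\ge1}(-1)^{j-1}j\,\bigl(p(N-P_j)+p(N-P_{-j})\bigr).
\]
This depends only on $N$, not on the $m$ you used to write $N=11n+6+P_{\pm m}$. With $P_{\pm m}\equiv0\pmod{11}$, that $N$ is just an arbitrary, large enough $N\equiv6\pmod{11}$. Ramanujan's congruence kills exactly the summands with $P_{\pm j}\equiv0\pmod{11}$. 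Every other summand involves $p$ at an argument $\not\equiv6\pmod{11}$, so ``treating each theta term separately'' gives nothing for them, and the ``combined convolution'' is $K_{\leq}(N)$ itself. Concretely, take $n=1$, $m=4$ ($P_4=22$), so $N=39$:
\begin{align*}
K_{\leq}(39)&=\bigl(p(38)+p(37)\bigr)-2\bigl(p(34)+p(32)\bigr)+3\bigl(p(27)+p(24)\bigr)\\
&\quad-4\bigl(p(17)+p(13)\bigr)+5p(4)\\
&=47652-41318+13755-1592+25=18522\equiv9\pmod{11}.
\end{align*}
Only the $j=4$ summand $-4p(17)$ is covered by Ramanujan, since $p(17)=297=27\cdot11$. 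Terms such as $p(24)\equiv2$ and $p(27)\equiv7\pmod{11}$ are not. Since $K_{\leq}(N)+K_{>}(N)=p(N)$ and $p(39)=31185\equiv0\pmod{11}$, we also get $K_{>}(39)=12663\equiv2\pmod{11}$, so (iv) fails at the same place. The literal hypothesis $P_{\pm m}\equiv0\pmod7$ fails too: $m=2$, $P_{-2}=7$, $n=1$ gives $K_{\leq}(24)=996\equiv6\pmod{11}$. Parts (ii) and (iii) rest on the same single-term extraction applied to \eqref{T12e1}, so they are not justified either.

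The paper's proof has the same defect, so appealing to ``the paper's earlier arguments'' does not help. Your shortcut (the $q^{11n+6}$-component of $1/g_1$ vanishes mod $11$) is Ramanujan's congruence. It is equivalent to the paper's assertion $L\equiv0\pmod{11}$, which the paper obtains by expanding $(g_1^3)^7$ via Lemma~\ref{m11}. The paper then keeps only the theta term $q^{P_{\pm m}}$ in \eqref{m11E3}, exactly as you do. Read that way, the argument shows only that the single summand $(-1)^{m-1}m\,p(11n+6)$ is divisible by $11$. That restates $p(11n+6)\equiv0\pmod{11}$ and says nothing about $K_{\leq}$ or $K_{>}$.

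A minor slip: the pentagonal residues mod $11$ are $0,1,2,4,5,7$, so $10$ is not one. You do not end up using that list.
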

\begin{proof} Employing Lemma \ref{modp} in \eqref{k<=2}, we obtain 
    \begin{equation}\label{m11E1}
    \sum_{n=1}^\infty K_{\leq}(n) q^n\equiv\dfrac{\left((q;q)_\infty^3\right)^7}{(q^{11}; q^{11})_\infty^2} \sum_{n=1}^\infty (-1)^{n-1} n q^{n(3n-1)/2} (1 + q^n)\pmod{11}.
\end{equation} 
Employing Lemma \ref{m11} in \eqref{m11E1}, we obtain 
\begin{equation}\label{m11E2}
    \dfrac{\left((q;q)_\infty^3\right)^7}{(q^{11}; q^{11})_\infty^2} \equiv\dfrac{\left(\mathcal{J}_0(q^{11})+q\mathcal{J}_1(q^{11})+q^3\mathcal{J}_3(q^{11})+q^6\mathcal{J}_6(q^{11})+q^{10}\mathcal{J}_{10}(q^{11})\right)^7}{(q^{11}; q^{11})_\infty^2}\pmod{11}.
\end{equation} 
Employing \eqref{m11E2} in \eqref{m11E1} and then extracting the terms involving $q^{11n+6+P_{\pm m}}$ where $P_{\pm m}\equiv0\pmod{11},$ we obtain
\begin{equation}\label{m11E3}
    \sum_{n=1}^\infty K_{\leq}(11n+6+P_{\pm m}) q^{11n+6+P_{\pm m}}\equiv\dfrac{Lq^{P_{\pm m}}}{(q^{11}; q^{11})_\infty^2}(-1)^{m-1}m\pmod{11}.
\end{equation} 
where from \cite{m11h}, 
\begin{align}
L=& 7q^{6}\mathcal{J}_0^6 \mathcal{J}_6 + 10q^{6}\mathcal{J}_0^5 \mathcal{J}_3^2 + q^{17}\mathcal{J}_0^4 \mathcal{J}_1 \mathcal{J}_6 \mathcal{J}_{10} + 8q^{6}\mathcal{J}_0^3 \mathcal{J}_1^3 \mathcal{J}_3 + 2q^{17}\mathcal{J}_0^3 \mathcal{J}_1 \mathcal{J}_3^2 \mathcal{J}_{10} + 8q^{28}\mathcal{J}_0^3 \mathcal{J}_6^3 \mathcal{J}_{10}\notag\\
&+ 3q^{17}\mathcal{J}_0^2 \mathcal{J}_1^2 \mathcal{J}_3 \mathcal{J}_6^2 + 3q^{28}\mathcal{J}_0^2 \mathcal{J}_1^2 \mathcal{J}_6 \mathcal{J}_{10}^2 + 3q^{28}\mathcal{J}_0^2 \mathcal{J}_3^2 \mathcal{J}_6^2 \mathcal{J}_{10} + 2q^{39}\mathcal{J}_0^2 \mathcal{J}_3 \mathcal{J}_6 \mathcal{J}_{10}^3 + 10q^{50}\mathcal{J}_0^2 \mathcal{J}_{10}^5 \notag\\
&+ 7q^{6}\mathcal{J}_0 \mathcal{J}_1^6 + q^{17}\mathcal{J}_0\mathcal{J}_1^4 \mathcal{J}_3 \mathcal{J}_{10} + 2q^{17}\mathcal{J}_0 \mathcal{J}_1^2 \mathcal{J}_3^3 \mathcal{J}_6 + 3q^{28}\mathcal{J}_0 \mathcal{J}_1^2 \mathcal{J}_3^2 \mathcal{J}_{10}^2 + q^{28}\mathcal{J}_0 \mathcal{J}_1 \mathcal{J}_3 \mathcal{J}_6^4 \notag\\
&+ 2q^{39}\mathcal{J}_0 \mathcal{J}_1 \mathcal{J}_6^3 \mathcal{J}_{10}^2 + q^{28}\mathcal{J}_0 \mathcal{J}_3^4 \mathcal{J}_6 \mathcal{J}_{10} + 8q^{39}\mathcal{J}_0 \mathcal{J}_3^3 \mathcal{J}_{10}^3 + 10q^{17}\mathcal{J}_1^5 \mathcal{J}_6^2 + 2q^{28}\mathcal{J}_1^3 \mathcal{J}_3 \mathcal{J}_6^2 \mathcal{J}_{10} \notag\\
&+ 8q^{39}\mathcal{J}_1^3 \mathcal{J}_6 \mathcal{J}_{10}^3 + 10q^{17}\mathcal{J}_1^2 \mathcal{J}_3^5 + 8q^{28}\mathcal{J}_1 \mathcal{J}_3^3 \mathcal{J}_6^3 + 3q^{39}\mathcal{J}_1 \mathcal{J}_3^2 \mathcal{J}_6^2 \mathcal{J}_{10}^2 +q^{50} \mathcal{J}_1 \mathcal{J}_3 \mathcal{J}_6 \mathcal{J}_{10}^4 \notag\\
&\label{md}+ 7q^{61}\mathcal{J}_1 \mathcal{J}_{10}^6 + 7q^{28}\mathcal{J}_3^6 \mathcal{J}_{10} + 7q^{39}\mathcal{J}_3 \mathcal{J}_6^6 + 10q^{50}\mathcal{J}_6^5 \mathcal{J}_{10}^2\notag\\
&\equiv0\pmod{11}.
\end{align}
Now (i) follows easily from \eqref{m11E3} and the fact that $L\equiv0\pmod {11}$. 
 Proofs of $(ii),$ $(iii),$ and $(iv)$ are similar to the  proof of $(i)$, so omitted.
\end{proof}
\section*{\bf Declarations}

\noindent\textbf{Funding}: This research did not receive funding.\\
\noindent{\bf Author Contributions.} Both authors contributed equally to this work.

\noindent{\bf Conflict of Interest.} The authors declare that there is no conflict of interest regarding the publication of this paper.

\noindent{\bf Human and Animal Rights.} The authors declare that there is no research involving human participants or animals in the context of this paper.	

\noindent{\bf Data Availability Statement.} Data sharing is not applicable to this paper as no datasets were generated or analyzed during the current study.	

\bibliographystyle{plain}

\end{document}